\newtheorem{theorem}{Theorem}[section]
\newtheorem*{lemma*}{Lemma}
\newtheorem{proposition}[theorem]{Proposition}
\newtheorem{corollary}[theorem]{Corollary}
\theoremstyle{definition}
\theoremstyle{remark}
\numberwithin{equation}{section}
\def\XXint#1#2#3{{\setbox0=\hbox{$#1{#2#3}{\int}$}
\vcenter{\hbox{$#2#3$}}\kern-.5\wd0}}
\begin{document}
\baselineskip4mm
\vskip0.1cm
\title[]{A class of positive Fox $H$-functions
}

\author[Filippo Giraldi]{Filippo Giraldi}
\address{Section of Mathematics, International Telematic University Uninettuno, \\ Corso Vittorio Emanuele II 39, 00186, Rome, Italy\\
School of Chemistry and Physics, University of KwaZulu-Natal \\ Westville Campus, Durban, 
South Africa}


\subjclass[2010]{ 33E99; 33E20; 26D07}


\keywords{Fox $H$-function, Wright generalized hypergeometric functions, MacRobert's $E$-functions, Meijer $G$-functions, Mellin transform}

\begin{abstract}
The Fox $H$-function is a special function which is defined via the Mellin-Barnes integrals and produces, as particular cases, Wright generalized hypergeometric functions, MacRobert's $E$-functions and Meijer $G$-functions, to name but few. Various cases of non-negative Fox $H$-functions are obtained in literature by relying on the properties of integral transforms and the complete monotonicity. In the present scenario, Fox $H$-functions, which are positive on $\mathbb{R}^+$, are determined via the Mellin convolution products of finite combinations, with possible repetitions, of elementary functions. The chosen elementary functions are non-negative on $\mathbb{R}^+$ and are defined via stretched exponential and power laws. Further forms of positive Fox $H$-functions can be obtained from the former via elementary properties and integral transforms. As particular cases, we determine forms of Wright generalized hypergeometric functions, MacRobert's $E$-functions and Meijer $G$-functions which are positive on $\mathbb{R}^+$.
\end{abstract}

\maketitle


\section{Introduction}\label{1}


The Fox $H$-function is a special function which is defined via the Mellin-Barnes integrals and provides various special functions as particular cases \cite{Fox1961,SriGuGo1982,PrBrMa1986,KilSa2010,MathSaxHau2010}. In fact, Wright generalized hypergeometric functions, MacRobert's $E$-functions and Meijer $G$-functions, are particular cases of the Fox $H$-function, to name but few. The Fox $H$-function is adopted for the description of the most various phenomena. In this regard, see Refs. \cite{MathSax1978,MathSaxHau2010,MaPaSa2005}, to name but few. For example, the Fox $H$-function is widely used in statistics. As matter of facts, the expectation values of statistically independent random variables are given by positive Fox $H$-functions in case the statistics is described by generalized gamma densities or type-1 and type-2 beta densities \cite{MathSaxHau2010,MathSax1978}. 

The property of non-negativity is essential for the use of the Fox $H$-function as distribution or density. In this regard, see Refs. \cite{CartSprin1977,BegCriDaSil2024}, to name but few. In literature, non-negative Fox $H$-functions and Wright generalized hypergeometric functions, are determined in various ways, for example, by relying on complete monotonicity and the properties of integral transforms. See Refs. \cite{Meh2021,ElKaMeh2010,Meh2018,LucKir2013}, to name but few.  

As a continuation of the above-described scenario, here, we aim to determine further conditions on the involved indexes and parameters such that the Fox $H$-function is positive on $\mathbb{R}^+$. The paper is organized as follows. A brief introduction of the Fox $H$-function, or the Mellin transform, is reported in Section \ref{2}, or \ref{3}. In Section \ref{4}, we determine Fox $H$-functions which are positive on $\mathbb{R}^+$ via Mellin convolution products of elementary functions. In Section \ref{5}, further forms of positive Fox $H$-functions are obtained from elementary properties or integral transforms of positive Fox $H$-functions. In Section \ref{6}, we consider special functions, which are particular cases of the Fox $H$-function, and we determine conditions which make these functions positive on $\mathbb{R}^+$. Summary of the results and conclusions are reported in Section \ref{7}. 

\section{The Fox $H$-function}\label{2}

The Fox $H$-function is defined via Mellin-Barnes integrals \cite{Fox1961,SriGuGo1982,PrBrMa1986,KilSa2010,MathSaxHau2010}:
  \begin{eqnarray}
&&\hspace{-2.5em}H_{p,q}^{m,n}\left[z\Bigg|
\begin{array}{rr}
\left(\alpha_j,A_j\right)_1^p\\
\left(\beta_j,B_j\right)_1^q
\end{array}
\right]=\frac{1}{2 \pi \imath} 
\int_{\mathcal{C}}
\Xi_{p,q}^{m,n}\left[s\Bigg|
\begin{array}{rr}
\left(\alpha_j,A_j\right)_1^p\\
\left(\beta_j,B_j\right)_1^q
\end{array}
\right]
 \,z^{-s}  ds, \label{FoxHdef}
\end{eqnarray}
where
\begin{eqnarray}
&&\hspace{-2.5em}
\Xi_{p,q}^{m,n}\left[s\Bigg|
\begin{array}{rr}
\left(\alpha_j,A_j\right)_1^p\\
\left(\beta_j,B_j\right)_1^q
\end{array}
\right]
=
\frac{\prod_{j=1}^m \Gamma\left(\beta_j+B_j s\right)
\prod_{j=1}^n \Gamma\left(1-\alpha_j-A_j  s\right)
}{\prod_{j=m+1}^q \Gamma\left(1-\beta_j-B_j  s\right)
\prod_{j=n+1}^p \Gamma\left(\alpha_j+A_j  s\right)
},\nonumber \\ && \label{Thetas}
\end{eqnarray}
in case the poles of the Gamma functions $\Gamma\left(\beta_1+B_1 s\right),\ldots$, $\Gamma\left(\beta_m+B_m s\right)$, differ from the poles of the Gamma functions $\Gamma\left(1-\alpha_1-A_1 s\right),\ldots$, \\$\Gamma\left(1-\alpha_n-A_n s\right)$. The required conditions are provided by the following constraints: 
\begin{eqnarray}
A_j\left(l+\beta_{j'}\right)\neq B_{j'}\left(\alpha_{j}-l'-1\right), \label{cond1}
\end{eqnarray}
for every $j=1, \ldots, n$, $j'=1, \ldots, m$, and $l,l' \in \mathbb{N}_0$, where $\mathbb{N}_0$ is the set of natural numbers, $\mathbb{N}_0\equiv \mathbb{N}\cup \left\{0\right\}$ and $\mathbb{N}\equiv \left\{1,2,
\ldots\right\}$. The empty products are interpreted as unity. The indexes $m,n,p,q$ take the following values: $0\leq n\leq p$, $0\leq m\leq q$, and the involved parameters are characterized by the following properties: $A_i, B_j\in \mathbb{R}^+$, $\alpha_i,\beta_j \in \mathbb{C}$, for every $i=1,\cdots,p$, and $j=1,\cdots,q$, where $\mathbb{R}^+$ is the set of the positive real numbers and $\mathbb{C}$ is the set of complex numbers. The following notation is adopted for the sake of shortness: the form $\left(x_j,X_j\right)_1^l$ is equivalent to the following expression: $\left(x_1,X_1\right),\ldots,\left(x_l,X_l\right)$, for every $l\in \mathbb{N}$. The term $\left(x_j,X_j\right)_1^l$ is unity in case the natural number $l$ vanishes. Similarly, the form $\left(\left(x_j,X_j\right),\left(y_j,Y_j\right)\right)_1^l$ is equivalent to the following expression: $\left(x_1,X_1\right)$, $\left(y_1,Y_1\right)$, $\ldots$, $\left(x_l,X_l\right),\left(y_l,Y_l\right)$, for every $l\in \mathbb{N}$. Also, the form $\left(x_j\right)_1^l$ is equivalent to the following expression: $x_1$, $\ldots$, $x_l$, for every $l\in \mathbb{N}$.

The main properties of the Fox $H$-function are determined by the parameters $\chi$, $\mu$, $\delta$, $\kappa$, defined as below \cite{KilSa2010,MathSaxHau2010},
\begin{eqnarray}
&&\chi=\sum_{j=1}^n A_j-\sum_{j=n+1}^p A_j
+\sum_{j=1}^m B_j-\sum_{j=m+1}^q B_j, \label{chi} \\
&& \mu=\sum_{j=1}^q B_j-\sum_{j=1}^p A_j,  \label{mu}\\
&&\delta=\sum_{j=1}^q \beta_j-\sum_{j=1}^p \alpha_j + \frac{p-q}{2}, \label{delta}
\\&&\kappa=\left[\prod_{j=1}^n \left(A_j\right)^{-A_j}\right]
\left[\prod_{j=1}^n \left(B_j\right)^{B_j}\right]. \label{kappa} 
 \end{eqnarray}
For the sake of shortness, refer to \cite{KilSa2010} for details on the existence condition, the contour path $\mathcal{C}$, the domain of analiticity and the properties of the Fox $H$-function. 

\section{The Mellin transform}\label{3}
 
Consider a general complex-valued function $g$, defined on the set of positive real numbers, $g: \mathbb{R}^+\to \mathbb{C} $. The Mellin transform $\mathfrak{M}\left[g\right](s)$, or $\hat{g}(s)$, of the function $g(s)$ is defined as below,
\begin{eqnarray}\mathfrak{M}\left[g\right](s)=\hat{g}(s)\equiv\int_0^{+\infty}
g(t) t^{s-1} dt
, \label{Mg}
\end{eqnarray}
for every complex value of the variable $s$ such that the above-reported integral exists. Let $g_1,g_2: \mathbb{R}^+\to \mathbb{C}$ be functions such that 
$t^{s-1} g_j(t)\in L_1\left(\mathbb{R}^+\right)$ for every complex value of the variable $s$ such that $x_j \leq \Re \left(s\right) \leq x'_j$, for every $j=1,2$, where $L_1\left(\mathbb{R}^+\right)$ is the space of the functions which are summable on $\mathbb{R}^+$. The Mellin transforms $\hat{g}_1(s)$ and $\hat{g}_2(s)$ coincide on a nonempty strip,
\begin{eqnarray}
\hat{g}_1(s)=\hat{g}_2(s),
 \label{mg1eqmg2}
\end{eqnarray}
for every $s$ such that 
$x \leq \Re \left(s\right) \leq x'$, where $x= \max 
\left\{x_1,x_2 \right\}$, $x'= \min 
\left\{x'_1,x'_2 \right\}$, and $x<x'$. If the above-reported conditions hold, the functions $g_1(t)$ and $g_2(t)$ are equal almost everywhere on $\mathbb{R}^+$ \cite{GlaePrudSkorn2006},
\begin{eqnarray}
g_1(t) \doteq g_2(t),
 \label{g1eqg2ae}
\end{eqnarray}
for $t>0$. The symbol $\doteq$ means that the equality holds almost everywhere on $\mathbb{R}^+$. Let the functions $g_1(t)$ and $g_2(t)$ be continuous over the non-empty interval $\left[t',t''\right]$, with $0<t'< t''$. If the above-reported conditions hold, the functions $g_1(t)$ and $g_2(t)$ are equal over the interval $\left[t',t''\right]$,
 \begin{eqnarray}
g_1(t) = g_2(t),
 \label{g1eqg2}
\end{eqnarray}
for every $t$ such that $t'\leq t \leq t''$.

The Mellin convolution product $\left(g_1 \vee g_2\right)(t)$ of the functions $g_1(t)$ and $g_2(t)$ is defined as below \cite{TitchmarshFT,Widder,DoetschHLT,MarichevMT,ButJan1997,GlaePrudSkorn2006}, 
\begin{eqnarray}
\left(g_1 \vee g_2\right)(t)\equiv \int_0^{+\infty}\frac{1}{\tau}\,g_1\left(\frac{t}{\tau}\right)g_2\left(\tau\right) d\tau
, \label{g1g2uMellinConv}
\end{eqnarray}
for every $t>0$, in case the above-reported integral exists. The Mellin convolution product is commutative and 
associative. Let the following condition: 
$t^{x_0-1} g_j(t)\in L_1\left(\mathbb{R}^+\right)$ hold for the real value $x_0$ such that 
$x \leq x_0 \leq x'$ and every $j=1,2$. If the above-reported conditions hold, the Mellin convolution product is Mellin transformable for $\Re \left(s\right) =x_0$, $t^{x_0-1} \left(g_1 \vee g_2\right)(t)\in L_1\left(\mathbb{R}^+\right)$, and 
\begin{eqnarray}
\mathfrak{M}\left[\left(g_1 \vee g_2\right)\right](s)=\hat{g}_1(s)
\hat{g}_2(s),
 \label{MTg1convg2}
\end{eqnarray}
for $\Re \left(s\right)=x_0$ \cite{TitchmarshFT,Widder,DoetschHLT,MarichevMT,ButJan1997,GlaePrudSkorn2006}. Additionally, let the function $t^{x_0}g_1(t)$ be uniformly continuous and bounded on $\mathbb{R}^+$. Then, the convolution product $\left(g_1 \vee g_2\right)(t)$ is continuous on $\mathbb{R}^+$ \cite{ButJan1997}.

The Mellin transform of the Fox $H$-function is given by the expression below \cite{KilSa2010}, 
\begin{eqnarray}
&&\hspace{-
6.0em}
\int_{0}^{+\infty}
t^{s-1} H_{p,q}^{m,n}\left[ t\Bigg|
\begin{array}{rr}
\left(\alpha_j,A_j\right)_1^{p}\\
\left(\beta_j,B_j\right)_1^{q}
\end{array}
\right]dt=
\Xi_{p,q}^{m,n}\left[s\Bigg|
\begin{array}{rr}
\left(\alpha_j,A_j\right)_1^{p}\\
\left(\beta_j,B_j\right)_1^{q}
\end{array}
\right],
\label{MTFoxH}
\end{eqnarray}
for the following values of the complex variable $s$:
\begin{eqnarray}
-\min_{j=1,\ldots, m}\left\{\frac{\Re \left(\beta_j\right)}{B_j}\right\}
<\Re \left(s\right)<
\min_{j=1,\ldots, n}\left\{\frac{1-\Re \left( \alpha_j \right)}{A_j}\right\}
, \label{MTHcond1}
 \end{eqnarray}
if $\chi>0$. 


\section{Positive Fox $H$-functions via Mellin convolution products }\label{4}

Consider the real-valued function $f: \mathbb{R}^+\to \mathbb{R}^+_0$ which is defined by the following Mellin convolution products:
\begin{eqnarray}
 &&\hspace{-5em}f(t)\equiv \Big(\Upsilon_{n_1}\vee \Phi_{n_2} \vee
\Psi_{n_3} \vee \Lambda_{n_4}\Big)(t)
, \label{ftMellinConvTrunc}
\end{eqnarray}
where $\left(n_1,n_2,n_3,n_4 \right)\in \mathbb{N}_0^4$, and $\mathbb{R}^+_0 \equiv \mathbb{R}^+ \cup \left\{0\right\}$. The functions $\Upsilon_{n_1}$, $\Phi_{n_2}$, $\Psi_{n_3}$, $\Lambda_{n_4} : \mathbb{R}^+\to \mathbb{R}^+_0$ are defined via the following Mellin convolution products:
\begin{eqnarray}
 &&\hspace{-5em}\Upsilon_{n}(t)\equiv\left(\varphi_{a_1,b_1}\vee \cdots \vee \varphi_{a_{n},b_{n}}\right)(t), \label{Upsilon}\\
&&\hspace{-5em}\Phi_{n}(t)\equiv \left(\phi_{a'_1,c_1,d_1}\vee \cdots \vee \phi_{a'_{n},c_{n},d_{n}}\right)(t)
, \label{Phi}\\
&&\hspace{-5em}\Psi_{n}(t)\equiv\left( \psi_{a''_1,o_1,r_1}\vee \cdots \vee \psi_{a''_{n},o_{n},r_{n}}\right)(t)
, \label{Psi}\\
&&\hspace{-5em}\Lambda_{n}(t)\equiv\left(\eta_{a'''_1,v_1,w_1}\vee \cdots \vee \eta_{a'''_{n},v_{n},w_{n}}
\right)(t)
, \label{Lambda}
\end{eqnarray}
for every $n\in \mathbb{N}$. By definition, the functions $\Upsilon_{0}(t)$, $\Phi_{0}(t)$, $\Psi_{0}(t)$, $\Lambda_{0}(t)$ coincide with unity, 
\begin{eqnarray}
 &&\hspace{-5em}\Upsilon_{0}(t)\equiv 1, \,\,\Phi_{0}(t)\equiv 1, \,\,\Psi_{0}(t)\equiv 1,\,\,\Lambda_{0}(t)\equiv 1, \label{convn0}
\end{eqnarray}
for every $t\in \mathbb{R}^+$. The functions $\varphi_{a,b}(t),\phi_{a,b,c}(t),\psi_{a,b,c}(t),\eta_{a,b,c}(t)$ are elementary functions which are defined via stretched exponential and power laws. Particularly, the function $\varphi_{a,b}: \mathbb{R}^+\to \mathbb{R}^+$ is defined by the following form:
\begin{eqnarray}
\varphi_{a,b}(t)\equiv\frac{t^{b/a}}{a }\, \exp\left(-t^{1/a}\right), \hspace{1em}t> 0,\label{varphiudef}
\end{eqnarray}
for every $a>0$, $b\geq 0$. The function $\phi_{a,b,c}: \mathbb{R}^+\to \mathbb{R}^+_0$ is defined by the expression below,
\begin{eqnarray}
&&\hspace{-3em}\phi_{a,b,c}(t)\equiv
\frac{t^{b/a}\left(1-t^{1/a}\right)^{c-b-1}}{a \Gamma\left(c-b\right)},\hspace{1em}0<t<1,\nonumber \\&&\label{phiudef}\\
&&\hspace{-3em}\phi_{a,b,c}(t)\equiv 0,\hspace{1em}t\geq 1,\nonumber
\end{eqnarray}
for every $a>0$, $b\geq 0$, $c\geq b+1$. The function $\psi_{a,b,c}: \mathbb{R}^+\to \mathbb{R}^+$ is defined by the following form:
\begin{eqnarray}
\psi_{a,b,c}(t)\equiv\frac{\Gamma\left(b+c\right)}{a }\, t^{b/a}\left(1+t^{1/a}\right)^{-b-c},\hspace{1em}t>0, \label{psiudef}
\end{eqnarray}
for every $a,c>0$, and $b \geq 0$. The function $\eta_{a,b,c}: \mathbb{R}^+\to \mathbb{R}^+_0$ is defined by the expression below,
\begin{eqnarray}
&&\hspace{-3em}\eta_{a,b,c}(t)\equiv\frac{t^{(1-c)/a}}{a \Gamma\left(c-b\right)}\left(t^{1/a}-1\right)^{c-b-1},\hspace{1em}t>1,\nonumber \\&&\hspace{-3em}\label{etaudef}\\&&\hspace{-3em}
\eta_{a,b,c}(t)\equiv 0, 
\hspace{1em}0<t\leq 1,\nonumber 
\end{eqnarray}
for every $a,b>0$, and $c\geq b+1$. The parameters $a_1,\ldots$, $a_{n}$, $b_1,\ldots$, $b_{n}$, $a'_1,\ldots$, $a'_{n}$, $c_1$,$\ldots$, $c_{n}$, $d_1, \ldots$, $d_{n}$, $a''_1, \ldots$, $a''_{n}$, $o_1$, $\ldots$, $o_{n}$, $r_1,\ldots$, $r_{n}$, $a'''_1,\ldots$, $a'''_{n}$, $v_1,\ldots$, $v_{n}$, $w_{1},\ldots$, $w_{n}$, involved in the definition of the function $f(t)$, are required to fulfill the following constraints:
\begin{eqnarray}
 &&a_j>0,\,\, b_j \geq 0,\hspace{1em}j=1, \ldots, n_1, \label{cond_ab} \\
 &&a'_j>0,\,\, c_j \geq 0,\,\, d_j \geq c_j +1,    \hspace{1em}j=1, \ldots, n_2, \label{cond_apdc}\\&&a''_j, r_j>0,\,\, o_j \geq 0,\hspace{1em}j=1, \ldots, n_3, \label{cond_appro}\\
&&a'''_j,v_j>0,\,\,w_j\geq v_j+1,\hspace{1em}j=1, \ldots, n_4. \label{cond_apppwv}
\end{eqnarray}
for every $\left(n_1,n_2,n_3,n_4 \right)\in \mathbb{S}$, where $\mathbb{S}\equiv \mathbb{N}_0^4\setminus \left\{ \left(0,0,0,0 \right)\right\}$. Constraints (\ref{cond_ab}) - (\ref{cond_apppwv}) are obtained from 
relations (\ref{varphiudef}) - (\ref{etaudef}).

\begin{theorem}
The function $f: \mathbb{R}^+\to \mathbb{R}^+_0$, defined by Eq. (\ref{ftMellinConvTrunc}) and constraints (\ref{cond_ab}) - (\ref{cond_apppwv}), is positive, $f(t)> 0$ for every $t>0$, for every $\left(n_1,n_2,n_3,n_4 \right)\in \mathbb{N}_0^4$, except for the following cases: $n_1,n_3,n_4=0$, and $n_2\geq 1$; or $n_1,n_2,n_3=0$, and $n_4\geq 1$. Particularly, $f(t)>0$, for $0<t<1$, and $f(t)=0$, for $t \geq 1$, if $n_1,n_3,n_4=0$ and $n_2\geq 1$; while $f(t)=0$, for $0<t\leq 1$, and $f(t)>0$, for $t > 1$, if $n_1,n_2,n_3=0$ and $n_4\geq 1$. 
\label{posf}
\end{theorem}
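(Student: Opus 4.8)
The plan is to linearise the construction by passing to the logarithmic variable. Writing $t=e^{x}$ turns the Mellin convolution $\vee$ into ordinary additive convolution on $\mathbb{R}$: setting $g^{\flat}(x):=g(e^{x})$, the substitution $\tau=e^{y}$ in \eqref{g1g2uMellinConv} gives $(g_{1}\vee g_{2})^{\flat}=g_{1}^{\flat}\ast g_{2}^{\flat}$, where $\ast$ is the usual convolution. Since $\vee$ is commutative and associative, I would first flatten $f$: the groups with $n_{i}=0$ contribute unity to the Mellin product and may be dropped, so that (recalling $(n_{1},n_{2},n_{3},n_{4})\in\mathbb{S}$) the function $f$ is a single $\vee$-product of the $N:=n_{1}+n_{2}+n_{3}+n_{4}\ge 1$ elementary factors of types $\varphi,\phi,\psi,\eta$ actually present. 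All these factors are non-negative on $\mathbb{R}^{+}$, hence so is $f$; the whole question is therefore the location of the \emph{positivity set}
\[
P(f):=\{x\in\mathbb{R}:f^{\flat}(x)>0\}=\{\log t:f(t)>0\}.
\]

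The key tool is the following elementary lemma, which I would prove once and then iterate: if $h_{1},h_{2}\ge 0$ are measurable with open positivity sets $P_{1},P_{2}\subseteq\mathbb{R}$ and their convolution exists, then $P(h_{1}\ast h_{2})=P_{1}+P_{2}$ (Minkowski sum). For the inclusion $\supseteq$, if $x=u+v$ with $u\in P_{1}$, $v\in P_{2}$, then openness yields an interval $I\ni v$ with $I\subseteq P_{2}$ and (after shrinking) $x-I\subseteq P_{1}$; hence $h_{1}(x-y)h_{2}(y)>0$ for all $y\in I$ and $\int_{\mathbb{R}}h_{1}(x-y)h_{2}(y)\,dy>0$. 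For $\subseteq$, if $x\notin P_{1}+P_{2}$ then for every $y$ either $y\notin P_{2}$ or $x-y\notin P_{1}$ (else $x=(x-y)+y\in P_{1}+P_{2}$), so the non-negative integrand vanishes identically and the convolution is $0$. Applied $N-1$ times to the flattened product, this identifies $P(f)$ with the Minkowski sum of the positivity sets of all elementary factors present.

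It remains to read off those positivity sets from \eqref{varphiudef}–\eqref{etaudef}. Under \eqref{cond_ab}–\eqref{cond_apppwv} one checks directly that $\varphi_{a,b}$ and $\psi_{a,b,c}$ are strictly positive on all of $\mathbb{R}^{+}$, each contributing the summand $\mathbb{R}$; that $\phi_{a,b,c}$ is positive exactly on $(0,1)$, contributing $(-\infty,0)$; and that $\eta_{a,b,c}$ is positive exactly on $(1,+\infty)$, contributing $(0,+\infty)$. Summing: if $n_{1}\ge 1$ or $n_{3}\ge 1$ a summand $\mathbb{R}$ is present, so $P(f)=\mathbb{R}$ and $f>0$ on $\mathbb{R}^{+}$. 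If $n_{1}=n_{3}=0$, then $P(f)$ is a sum of $n_{2}$ copies of $(-\infty,0)$ and $n_{4}$ copies of $(0,+\infty)$; using $(-\infty,0)+(-\infty,0)=(-\infty,0)$, $(0,+\infty)+(0,+\infty)=(0,+\infty)$ and $(-\infty,0)+(0,+\infty)=\mathbb{R}$, this equals $\mathbb{R}$ when $n_{2},n_{4}\ge 1$, equals $(-\infty,0)$ when $n_{2}\ge 1,\ n_{4}=0$, and equals $(0,+\infty)$ when $n_{2}=0,\ n_{4}\ge 1$. Translating back through $x=\log t$ reproduces exactly the three alternatives of the statement, the two genuine exceptions being $0<t<1$ (case $n_{1}=n_{3}=n_{4}=0$, $n_{2}\ge1$) and $t>1$ (case $n_{1}=n_{2}=n_{3}=0$, $n_{4}\ge1$).

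The main obstacle I anticipate is analytic rather than combinatorial: to invoke the lemma I must ensure each iterated convolution exists and is finite, which is delicate for the $\psi$ and $\eta$ factors, whose decay is only a power law. Here I would rely on the existence and integrability conditions for $\vee$ recorded in Section \ref{3}, selecting a common abscissa $x_{0}$ of convergence from the overlap of the individual Mellin strips so that $t^{x_{0}-1}$ times each factor lies in $L_{1}(\mathbb{R}^{+})$. A second, minor point is the behaviour at $t=1$: since $\phi_{a,b,c}$ and $\eta_{a,b,c}$ vanish there and Minkowski sums of open sets are open, the endpoint is automatically assigned correctly (giving $f=0$ for $t\ge1$, resp.\ $t\le1$, in the two exceptional cases), so no separate boundary computation is needed.
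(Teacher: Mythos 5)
Your proof is correct and rests on the same underlying idea as the paper's: classify the elementary factors into three positivity types (positive on all of $\mathbb{R}^+$, on $(0,1)$ only, on $(1,\infty)$ only) and track how these positivity sets combine under Mellin convolution. The difference is in rigour rather than strategy. The paper simply asserts the combination rules \refeq{r221}--\refeq{rjk} for the model functions $\rho_1,\rho_2,\rho_3$ and concludes that they ``suggest'' the result; you instead pass to the logarithmic variable, observe that $\vee$ becomes ordinary additive convolution, and prove a clean Minkowski-sum lemma $P(h_1\ast h_2)=P_1+P_2$ for non-negative functions with open positivity sets, which you then iterate. This supplies exactly the justification the paper leaves implicit, and the openness of Minkowski sums handles the endpoint $t=1$ in the two exceptional cases without a separate computation. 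The one point you rightly flag as delicate --- existence and finiteness of the iterated convolutions, needed both for $f$ to be well defined and for the positivity conclusion to be meaningful --- is treated the same way in the paper, by appeal to the overlap of the individual Mellin strips from Section \ref{3} (note that the strip \refeq{stripMft} is nonempty under the constraints \refeq{cond_ab}--\refeq{cond_apppwv}, so a common abscissa $x_0$ does exist). A last cosmetic remark: when you say the groups with $n_i=0$ ``contribute unity and may be dropped,'' this should be read as the empty-product convention (omitting those factors from the $\vee$-product), since the constant function $1$ is not the identity for Mellin convolution; this is an ambiguity inherited from \refeq{convn0} in the paper, not a gap in your argument.
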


\begin{proof}
Consider the functions $\rho_1: \mathbb{R}^+\to \mathbb{R}^+$, and $\rho_2,\rho_3: \mathbb{R}^+\to \mathbb{R}_0^+$, with the following properties: $\rho_1(t)>0$, for $t>0$; $\rho_2(t)>0$, for $0< t<1$, and $\rho_2(t)=0$, for $t \geq 1$; $\rho_3(t)=0$, for $0< t\leq 1$, and $\rho_3(t)>0$, for $t > 1$. Let the convolution product $\left(\rho_j \vee \rho_k \right)(t)$ exist for every $j,k=1,2,3$, and $t>0$. Then, the following properties hold:
\begin{eqnarray}
&&\left(\rho_2 \vee \rho_2 \right)(t)>0, \hspace{1em}0< t<1, \label{r221} \\
&&\left(\rho_2 \vee \rho_2 \right)(t)=0, \hspace{1em}t\geq 1,  \label{r222} \\
&&\left(\rho_3 \vee \rho_3 \right)(t)>0, \hspace{1em}t\geq 1, \label{r331}  \\
&&\left(\rho_3 \vee \rho_3 \right)(t)=0, \hspace{1em} 0< t<1,\label{r332} \\
&&\left(\rho_j \vee \rho_k \right)(t)>0, 
\hspace{1em}t>0,  \label{rjk} 
\end{eqnarray}
for $j=1$ and $k=1,2,3$; $j=2$ and $k=1,3$; $j=3$ and $k=1,2$. The functions defined by Eqs. (\ref{varphiudef}) - (\ref{etaudef}), and constraints (\ref{cond_ab}) - (\ref{cond_apppwv}), are special cases of the functions $\rho_1,\rho_2,\rho_3$. Thus, the function $f(t)$, given by Eq. (\ref{ftMellinConvTrunc}), is the result of convolution products of functions like $\rho_1$, $\rho_2$, $\rho_3$. Conditions (\ref{r221}) - (\ref{rjk}) and the commutative and associative properties of the Mellin convolution product suggest that the convolution product given by Eq. (\ref{ftMellinConvTrunc}) is positive for every $t>0$, except for 
the two following cases: $n_1,n_3,n_4=0$, and $n_2\geq 1$; or $n_1,n_2,n_3=0$, and $n_4\geq 1$. The convolution product given by Eq. (\ref{ftMellinConvTrunc}) is non-negative in the two latter cases.
\end{proof}

\begin{proposition}
\label{MTft}
The Mellin transform $\hat{f}(s)$ of the function $f(t)$, defined by Eq. (\ref{ftMellinConvTrunc}) and constraints (\ref{cond_ab}) - (\ref{cond_apppwv}), is given by the form below,
\begin{eqnarray}
&&\hspace{-3.3em}\hat{f}(s)= \left[\prod_{j=1}^{n_1}\Gamma\left(a_j s+b_j\right)\right]\left[
\prod_{j=1}^{n_2}\frac{\Gamma\left(a'_j s+c_j\right)}{\Gamma\left(a'_j s+d_j\right)}\right]\nonumber \\ &&\times\left[
\prod_{j=1}^{n_3}\Gamma\left(a''_j s+o_j\right)
\Gamma\left(r_j-a''_j s\right)\right] \left[
\prod_{j=1}^{n_4}\frac{\Gamma\left(v_j-a'''_j s\right)}{\Gamma\left(w_j-a'''_j s\right)}\right],  \label{MTfGammaProducts}
\end{eqnarray}
for every value of the complex variable $s$ such that 
\begin{eqnarray}
\xi'>\Re \left(s\right)> -\xi,
\label{stripMft}
\end{eqnarray}
where
\begin{eqnarray}
&&\xi'= \min\left\{\frac{r_1}{a''_1}, \ldots,\frac{r_{n_3}}{a''_{n_3}},\frac{v_1}{a'''_1}, \ldots,\frac{v_{n_4}}{a'''_{n_4}}\right\}, \label{xip}\\
&&\xi=\min\left\{\frac{b_1}{a_1}, \ldots,\frac{b_{n_1}}{a_{n_1}},
\frac{c_1}{a'_1}, \ldots,\frac{c_{n_2}}{a'_{n_2}},\frac{o_1}{a''_1}, \ldots,\frac{o_{n_3}}{a''_{n_3}}\right\}. \label{xi}  
 \end{eqnarray}
\end{proposition}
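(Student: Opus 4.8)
The plan is to reduce the computation to the four elementary building blocks. By the factorizations (\ref{Upsilon})--(\ref{Lambda}) together with the convention (\ref{convn0}), the function $f$ is a finite Mellin convolution of the elementary functions $\varphi$, $\phi$, $\psi$, $\eta$ alone. Since the Mellin convolution product is commutative and associative and its Mellin transform factorizes into a product by (\ref{MTg1convg2}), I would show
\[
\hat f(s)=\Big[\prod_{j=1}^{n_1}\hat\varphi_{a_j,b_j}(s)\Big]\Big[\prod_{j=1}^{n_2}\hat\phi_{a'_j,c_j,d_j}(s)\Big]\Big[\prod_{j=1}^{n_3}\hat\psi_{a''_j,o_j,r_j}(s)\Big]\Big[\prod_{j=1}^{n_4}\hat\eta_{a'''_j,v_j,w_j}(s)\Big],
\]
with the convention that an empty product ($n_i=0$) equals unity, matching the empty convolution acting as the identity. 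Everything then rests on computing the four elementary transforms and on identifying the common vertical strip on which this factorization is licensed.

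Second, I would compute each elementary Mellin transform by the single substitution $u=t^{1/a}$, which turns every integrand into a classical Gamma or Beta integral. For (\ref{varphiudef}) this yields directly $\hat\varphi_{a,b}(s)=\Gamma(as+b)$, convergent for $\Re(s)>-b/a$. For (\ref{phiudef}) the substitution on $(0,1)$ produces the Beta integral $B(as+b,c-b)$; the prefactor $1/\Gamma(c-b)$ cancels the corresponding Gamma and gives $\hat\phi_{a,b,c}(s)=\Gamma(as+b)/\Gamma(as+c)$ for $\Re(s)>-b/a$ (the constraint $c\ge b+1$ makes $c-b>0$). For (\ref{psiudef}) the integral over $(0,\infty)$ is $\int_0^\infty u^{as+b-1}(1+u)^{-b-c}\,du=B(as+b,c-as)$, so $\hat\psi_{a,b,c}(s)=\Gamma(as+b)\Gamma(c-as)$, convergent on $-b/a<\Re(s)<c/a$. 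For (\ref{etaudef}) the substitution (followed by $u\mapsto 1/u$) again produces a Beta integral and $\hat\eta_{a,b,c}(s)=\Gamma(b-as)/\Gamma(c-as)$, convergent for $\Re(s)<b/a$. Reading these off with the parameter assignments in (\ref{Upsilon})--(\ref{Lambda}) reproduces exactly the four bracketed products in (\ref{MTfGammaProducts}).

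Third, I would determine the strip (\ref{stripMft}) as the intersection of the individual strips found above. The left endpoints come from $\varphi$, $\phi$, $\psi$, giving $\Re(s)>-b_j/a_j$, $\Re(s)>-c_j/a'_j$, $\Re(s)>-o_j/a''_j$, whose common bound is $\Re(s)>-\xi$ with $\xi$ as in (\ref{xi}); the right endpoints come from $\psi$ and $\eta$, giving $\Re(s)<r_j/a''_j$ and $\Re(s)<v_j/a'''_j$, whose common bound is $\Re(s)<\xi'$ with $\xi'$ as in (\ref{xip}). Thus the product is valid precisely on $-\xi<\Re(s)<\xi'$. Constraints (\ref{cond_ab})--(\ref{cond_apppwv}) guarantee that all denominators $a_j,a'_j,a''_j,a'''_j$ are positive and all relevant numerators nonnegative, so the endpoints are well defined.

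I expect the main obstacle to be the justification, rather than the evaluation, of the factorization: the convolution theorem (\ref{MTg1convg2}) is stated for a single pair and requires a real $x_0$ in a common nonempty strip with $t^{x_0-1}g_j\in L_1(\mathbb{R}^+)$. To iterate it over all $n_1+n_2+n_3+n_4$ factors I would first check that the intersection $-\xi<\Re(s)<\xi'$ is nonempty, fix any real $x_0$ in it, and then peel off one factor at a time, at each step verifying that both the already-accumulated convolution and the next elementary factor are absolutely Mellin transformable on the line $\Re(s)=x_0$ (which holds because $x_0$ lies in every individual strip). Carrying this induction through cleanly --- and confirming that the accumulated partial convolution stays in $L_1$ against $t^{x_0-1}$ at each stage --- is the delicate point; the elementary transforms themselves are routine.
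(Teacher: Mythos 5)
Your proposal is correct and follows essentially the same route as the paper: the paper's proof likewise factors $\hat f(s)$ into the four elementary Mellin transforms $\hat\varphi_{a,b}(s)=\Gamma(as+b)$, $\hat\phi_{a,b,c}(s)=\Gamma(as+b)/\Gamma(as+c)$, $\hat\psi_{a,b,c}(s)=\Gamma(as+b)\Gamma(c-as)$, $\hat\eta_{a,b,c}(s)=\Gamma(b-as)/\Gamma(c-as)$ (quoted from the literature rather than derived) and reads off the strip as the intersection of the individual strips. Your explicit Beta-integral evaluations and the inductive justification for iterating the convolution theorem (\ref{MTg1convg2}) supply details the paper delegates to its references, but the argument is the same.
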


\begin{proof}
The Mellin transform $\hat{f}(s)$, is evaluated from the Mellin convolution form (\ref{ftMellinConvTrunc}) as a product of the following Mellin transforms \cite{ButJan1997,MarichevMT}:
\begin{eqnarray}
&&\hspace{-3em}\hat{\varphi}_{a,b}(s)=\Gamma\left(a s +b\right), 
\hspace{1em} \Re \left(s\right)>-\frac{b}{a}, \,\, a>0, \,\,b\geq 0, \label{MTvarphi}\\
&&\hspace{-3em}\hat{\phi}_{a,b,c}(s)=\frac{\Gamma\left(a s +b\right)}{\Gamma\left(a s +c\right)},  
\hspace{1em} \Re \left(s\right)>-\frac{b}{a}, \,\, a>0, \,\,b\geq 0,
\,\,c\geq b+1,\label{MTphi}\\
&&\hspace{-3em}\hat{\psi}_{a,b,c}(s)=
\Gamma\left(a s +b\right)\Gamma\left(c-a s\right),
\hspace{0.3em} \frac{c}{a}>\Re \left(s\right)>-\frac{b}{a},\,a,c>0,\,b \geq 0, \label{MTpsi}\\
&&\hspace{-3em}\hat{\eta}_{a,b,c}(s)
=\frac{\Gamma\left(b-a s\right)}{\Gamma\left(c-a s\right)},  
\hspace{1em} \Re \left(s\right)<\frac{b}{a}, \,\,a,b>0,\,\,c\geq b+1. \label{MTeta}
\end{eqnarray}
The above-reported constraints concerning the value of $\Re \left(s\right)$ provide Eqs. (\ref{stripMft}) - (\ref{xi}). 
\end{proof}

Consider the following Fox $H$-function:
\begin{eqnarray}
 &&\hspace{-2em}H^{m',n'}_{p',q'}\left[t
\Bigg|
\begin{array}{ll}
\left(\alpha'_j,A'_j\right)_1^{p'} \\  
\left(\beta'_j,B'_j\right)_1^{q'}
\end{array}
\right], 
\label{FoxHft}
\end{eqnarray}
which is defined for every $t>0$, as follows. The indexes $m',n',p',q'$ are given by the forms below,
\begin{eqnarray}
 && m'=n_1+n_2+n_3, \label{mp} \\&&
n'=n_3+n_4, \label{np} \\&&
p'=n_2+n_3+n_4, \label{pp} \\&&
q'=n_1+n_2+n_3+n_4, \label{qp}
\end{eqnarray}
where $n_1$, $n_2$, $n_3$, $n_4$ are natural numbers such that
  \begin{eqnarray}
	n_1 \geq 1, \,\, \textrm{or} \,\, n_3 \geq 1. \label{condn1n2n3n4}
\end{eqnarray}
	The involved parameters are defined as below:
\begin{eqnarray}
&&\hspace{-4em}\left(\alpha'_j,A'_j\right)_1^{p'}\equiv \left(1-r_{j},a''_{j}\right)_1^{n_3}
,\left(1-v_{j},a'''_{j}\right)_1^{n_4}
,\left(d_{j},a'_{j}\right)_1^{n_2}
, \label{FoxHftA}\\
&&\hspace{-4em}\left(\beta'_{j},B'_{j'}\right)_1^{q'}
\equiv \left(b_{j},a_{j}\right)_1^{n_1}, \left(c_{j},a'_{j}\right)_1^{n_2}
,\left(o_{j},a''_{j}\right)_1^{n_3}
, \left(1-w_{j},a'''_{j}\right)_1^{n_4}
. \label{FoxHftB}
\end{eqnarray}
Constraints (\ref{cond_ab}) - (\ref{cond_apppwv}) are requested to be fulfilled. The definition of the Fox $H$-function (\ref{FoxHft}) requires constraint (\ref{cond1}) to hold,
\begin{eqnarray}
A'_j\left(\beta'_{j'}+l\right)\neq B'_{j'}\left(\alpha'_j-l'-1\right),
 \label{cond1prime}
\end{eqnarray}
for every $j=1, \ldots, n'$, $j'=1, \ldots, m'$, and $l,l' \in \mathbb{N}_0$. Let $\chi'$, be the value of the parameter $\chi$, defined by Eq. (\ref{chi}), which characterizes the Fox $H$-function (\ref{FoxHft}). The parameter $\chi'$ is positive, 
\begin{eqnarray}
&&\chi'=\sum_{j=1}^{n_1} a_j+2\sum_{j=1}^{n_3} a^{''}_j>0, \label{chift}
\end{eqnarray}
if condition (\ref{condn1n2n3n4}) holds. Instead, the parameter $\chi'$ vanishes, 
$\chi'=0$, for $n_1=n_3=0$.

Following Refs. \cite{KilSa2010,MathSaxHau2010}, the Fox $H$-function exists and is an analytic function of the complex variable $z$ for every $z\neq 0$ such that $\left|\arg z\right|<\pi\chi/2$. The parameter $\chi'$, given by Eq. (\ref{chift}), is positive for the Fox $H$-function (\ref{FoxHft}) in case condition (\ref{condn1n2n3n4}) holds. Thus, the Fox $H$-function (\ref{FoxHft}) exists and is continuous for every $t\in \mathbb{R}^+$ if condition (\ref{cond1prime}) holds for every $j=1, \ldots, n$, $j'=1, \ldots, m$, $l,l' \in \mathbb{N}_0$, and $n_1\in \mathbb{N}$, or $n_3\in \mathbb{N}$. Under the above-reported conditions, the Mellin transform of the Fox $H$-function (\ref{FoxHft}) is given by the following function \cite{KilSa2010,MathSaxHau2010}: 
\begin{eqnarray}
\Xi_{p',q'}^{m',n'}\left[s\Bigg|
\begin{array}{rr}
\left(\alpha'_{j},A'_{j}\right)_1^{p'}\\
\left(\beta'_{j'},B'_{j}\right)_1^{q'}
\end{array}
\right],  \label{Xisp}
\end{eqnarray}
for every value of the complex variable $s$ which belongs to the strip (\ref{MTHcond1}). This strip coincides with the strip (\ref{stripMft}).

Relations (\ref{cond_ab}) - (\ref{cond_apppwv}), (\ref{mp}) - (\ref{FoxHftB}), and constraint 
(\ref{cond1prime}), required to hold for every $j=1, \ldots, n'$, $j'=1, \ldots, m'$, and $l,l' \in \mathbb{N}_0$, are conditions which are sufficient for the existence and positivity of the Fox $H$-function on $\mathbb{R}^+$. The above-reported conditions are referred to as the e.p. (existence and positivity) conditions, for the sake of shortness.

\begin{theorem}
The Fox $H$-function (\ref{FoxHft}) is positive on $\mathbb{R}^+$,
\begin{eqnarray}
H^{m',n'}_{p',q'}\left[t
\Bigg|
\begin{array}{rr}
\left(\alpha'_{j},A'_{j}\right)_1^{p'}\\
\left(\beta'_{j'},B'_{j}\right)_1^{q'}
\end{array}
\right]> 0, 
\label{FoxHgeq0}
\end{eqnarray}
for every $t>0$, if the e.p. conditions hold.
\label{Hpos}
\end{theorem}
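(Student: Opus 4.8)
The plan is to identify the Fox $H$-function (\ref{FoxHft}) with the convolution function $f$ of Theorem \ref{posf} through their Mellin transforms, and then to inherit the positivity. First I would substitute the parameter lists (\ref{mp})--(\ref{FoxHftB}) into the ratio of Gamma functions (\ref{Thetas}) and sort the resulting factors according to the four blocks indexed by $a_j,a'_j,a''_j,a'''_j$. The first numerator product over $j=1,\dots,m'=n_1+n_2+n_3$ splits into $\prod_{j=1}^{n_1}\Gamma(a_js+b_j)$, $\prod_{j=1}^{n_2}\Gamma(a'_js+c_j)$, $\prod_{j=1}^{n_3}\Gamma(a''_js+o_j)$; the second numerator product over $j=1,\dots,n'=n_3+n_4$ gives $\prod_{j=1}^{n_3}\Gamma(r_j-a''_js)$ and $\prod_{j=1}^{n_4}\Gamma(v_j-a'''_js)$; the two denominator products contribute $\prod_{j=1}^{n_4}\Gamma(w_j-a'''_js)$ and $\prod_{j=1}^{n_2}\Gamma(a'_js+d_j)$. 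Regrouping reproduces exactly (\ref{MTfGammaProducts}), so that $\Xi^{m',n'}_{p',q'}=\hat{f}(s)$ on a common strip, which the text has already observed coincides with (\ref{stripMft}).

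With the two Mellin transforms identified, I would apply the uniqueness principle (\ref{mg1eqmg2})--(\ref{g1eqg2ae}): sharing a transform on a nonempty strip forces $f$ and the Fox $H$-function to coincide almost everywhere on $\mathbb{R}^+$. Upgrading this to pointwise equality via (\ref{g1eqg2}) requires both functions to be continuous. The Fox $H$-function is continuous because condition (\ref{condn1n2n3n4}) makes $\chi'>0$, as recorded after (\ref{chift}).

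The main obstacle is establishing the continuity of $f$ on all of $\mathbb{R}^+$. Here I would exploit condition (\ref{condn1n2n3n4}), which forces $n_1\ge 1$ or $n_3\ge 1$, so that the convolution (\ref{ftMellinConvTrunc}) contains at least one factor of type $\varphi_{a,b}$ or $\psi_{a,b,c}$. Singling out one such factor as $g_1$ and letting $g_2$ be the convolution of all remaining factors, I would verify, for $x_0$ in the strip (\ref{stripMft}), that $t^{x_0}g_1(t)$ is bounded and uniformly continuous on $\mathbb{R}^+$: as $t\to 0^+$ the inequality $x_0>-\xi$ forces a strictly positive power of $t$, while as $t\to+\infty$ the factor decays either through the stretched exponential of $\varphi$ or, for $\psi$, at the algebraic rate controlled by the upper bound $\xi'$ of the strip. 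The continuity remark following (\ref{MTg1convg2}) then shows that $f=g_1\vee g_2$ is continuous.

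Finally, since $f$ is continuous and equal to the Fox $H$-function almost everywhere, (\ref{g1eqg2}) yields pointwise equality on every compact subinterval of $\mathbb{R}^+$, and hence on all of $\mathbb{R}^+$. Because condition (\ref{condn1n2n3n4}) rules out both degenerate cases of Theorem \ref{posf}, that theorem gives $f(t)>0$ for every $t>0$, and therefore the Fox $H$-function (\ref{FoxHft}) satisfies (\ref{FoxHgeq0}).
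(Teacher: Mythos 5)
Your proposal is correct and follows essentially the same route as the paper: identify the Mellin transform of the convolution $f$ with $\Xi^{m',n'}_{p',q'}$ on the common strip, invoke Mellin uniqueness to get almost-everywhere equality, upgrade to pointwise equality via continuity of both sides, and conclude positivity from Theorem~\ref{posf}. Your continuity step is in fact slightly more careful than the paper's, since you isolate a single $\varphi$ or $\psi$ factor (guaranteed by (\ref{condn1n2n3n4})) as the uniformly continuous bounded factor $g_1$ rather than asserting this property for all four elementary families.
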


\begin{proof}
The Mellin transform $\hat{f}(s)$, given by Eq. (\ref{MTfGammaProducts}), of the function $f(t)$, defined by Eq. (\ref{ftMellinConvTrunc}) and constraints (\ref{cond_ab}) - (\ref{cond_apppwv}), is equal to the Mellin transform (\ref{Xisp}) of the Fox $H$-function (\ref{FoxHft}),
\begin{eqnarray}
\hat{f}(s)=\Xi_{p',q'}^{m',n'}\left[s\Bigg|
\begin{array}{rr}
\left(\alpha'_{j},A'_{j}\right)_1^{p'}\\
\left(\beta'_{j'},B'_{j}\right)_1^{q'}
\end{array}
\right], \label{fseqHs}
\end{eqnarray}
over the nonempty strip (\ref{stripMft}), if the involved indexes are given by Eqs. (\ref{mp}) - (\ref{qp}) and fulfill constraint (\ref{condn1n2n3n4}), and constraint (\ref{cond1prime}) holds for every $j=1, \ldots, n'$, $j'=1, \ldots, m'$, $l,l' \in \mathbb{N}_0$. Thus, following Ref. \cite{GlaePrudSkorn2006}, the function $f(t)$ is equal to the Fox $H$-function (\ref{FoxHft}) almost everywhere on $\mathbb{R}^+$,
\begin{eqnarray}
 &&\hspace{-2em}
f(t)\doteq
H^{m',n'}_{p',q'}\left[t
\Bigg|
\begin{array}{ll}
\left(\alpha'_j,A'_j\right)_1^{p'} \\  
\left(\beta'_j,B'_j\right)_1^{q'}
\end{array}
\right], 
\label{ftdoteqFoxH}
\end{eqnarray}
for every $t>0$.

The function $t^{x_0}\varphi_{a,b}: \mathbb{R}^+\to \mathbb{R}^+$ is uniformly continuous and bounded on $\mathbb{R}^+$ for $x_0\geq -b/a$. The function $t^{x_0}\phi_{a,b,c}: \mathbb{R}^+\to \mathbb{R}^+_0$ is uniformly continuous and bounded on $\mathbb{R}^+$ for $x_0\geq -b/a$, and $c\geq b+1$. The function $t^{x_0}\psi_{a,b,c}: \mathbb{R}^+\to \mathbb{R}^+$ is uniformly continuous and bounded on $\mathbb{R}^+$ for $c/a>x_0\geq -b/a$. The function $t^{x_0}\eta_{a,b,c}: \mathbb{R}^+\to \mathbb{R}^+_0$ is uniformly continuous and bounded on $\mathbb{R}^+$ for $x_0 < b/a$, and $c\geq b+1$. Thus, following Ref. \cite{ButJan1997}, the Mellin convolution product of any combination, with possible repetitions, of the functions $\varphi_{a_1,b_1}(t),\ldots,$ $\varphi_{a_{n},b_{n}}(t)$, $\phi_{a'_1,c_1,d_1}(t),\ldots$, $\phi_{a'_{n},c_{n},d_{n}}(t)$, $\psi_{a''_1,o_1,r_1}(t), \ldots$, $\psi_{a''_{n},o_{n},r_{n}}(t)$, $\ldots$, $\eta_{a'''_1,v_1,w_1}(t)$, $\ldots$, $\eta_{a'''_{n},v_{n},w_{n}}(t)$ is continuous on $\mathbb{R}^+$ and admits Mellin transform in the nonempty strip (\ref{stripMft}).

The function $f(t)$, given by Eq. (\ref{ftMellinConvTrunc}), and the Fox $H$-function (\ref{FoxHft}) are equal to each other almost everywhere, Eq. (\ref{ftdoteqFoxH}), and are continuous on $\mathbb{R}^+$. Consequently, the above-reported functions are equal to each other on every closed interval which is included in $\mathbb{R}^+$, 
\begin{eqnarray}
f(t)=H^{m',n'}_{p',q'}\left[t
\Big|
\begin{array}{rr}
\left(\alpha'_j,A'_j\right)_1^{p'} \\  
\left(\beta'_j,B'_j\right)_1^{q'}
\end{array}
\right], 
\label{fteqFoxH}
\end{eqnarray}
for every $t,t',t''$, such that $t''\geq t\geq t'>0$. 
\end{proof}

In summary, the Fox $H$-function (\ref{FoxHft}) exists and is continuous on $\mathbb{R}^+$ if condition (\ref{cond1prime}) holds for every $j=1, \ldots, n'$, $j'=1, \ldots, m'$, and $l,l' \in \mathbb{N}_0$, and constraint (\ref{condn1n2n3n4}) is fulfilled \cite{KilSa2010,MathSaxHau2010}. The function $f(t)$ is properly defined and is positive on $\mathbb{R}^+$ if conditions (\ref{cond_ab}) - (\ref{cond_apppwv}) hold and constraint (\ref{condn1n2n3n4}) is fulfilled. The Fox $H$-function (\ref{FoxHft}) is equal on $\mathbb{R}^+$ to the function $f(t)$ if the above-reported conditions hold. Thus, the Fox $H$-function (\ref{FoxHft}) exists and is positive on $\mathbb{R}^+$ if the e.p. conditions hold.

\section{Further forms of positive Fox $H$-functions}\label{5}

In the previous Section, we have obtained conditions involving indexes and parameters, labeled as the e.p. conditions, which make the corresponding Fox $H$-functions positive on $\mathbb{R}^+$. In the present Section, we show how further positive Fox $H$-functions can be determined from positive Fox $H$-functions by using elementary properties or integral transforms. In order to simplify the notation, in the present Section, we use the indexes $m',n',p',q'$, and the parameters $\alpha'_1, \ldots,\alpha'_p$, 
$A'_1, \ldots,A'_p$, $\beta'_1, \ldots,\beta'_q$, $B'_1, \ldots$, $B'_q$, to identify a general Fox $H$-function which is positive on $\mathbb{R}^+$, as the Fox $H$-function (\ref{FoxHft}) is. We consider uniquely real values of the independent variables and the involved parameters since the present analysis concerns positive Fox $H$-functions. However, the properties and the integral transforms reported below can be generalized to complex values of the involved variable and parameters \cite{KilSa2010,MathSaxHau2010}.

The following elementary properties of the Fox $H$-function hold for every allowed values of the involved parameters \cite{KilSa2010,MathSaxHau2010}:
\begin{eqnarray}
H^{m,n}_{p,q}\left[z
\Bigg|
\begin{array}{rr}
\left(\alpha_j,A_j\right)_1^{p} \\ 
\left(\beta_j,B_j\right)_1^{q}
\end{array}
\right]=
H^{n,m}_{q,p}\left[\frac{1}{z}
\Bigg|
\begin{array}{rr}
 \left(1-\beta_j,B_j\right)_1^{q}\\ \left(1-\alpha_j,A_j\right)_1^{p}
\end{array}
\right], 
\label{FoxH12geq0}
\end{eqnarray}
\begin{eqnarray}
H^{m,n}_{p,q}\left[z
\Bigg|
\begin{array}{rr}
\left(\alpha_j,A_j\right)_1^{p} \\
\left(\beta_j,B_j\right)_1^{q}
\end{array}
\right]=\omega
H^{m,n}_{p,q}\left[z^{\omega}
\Bigg|
\begin{array}{rr}
\left(\alpha_j,\omega A_j\right)_1^{p} \\ 
\left(\beta_j,\omega B_j\right)_1^{q}
\end{array}
\right], 
\label{FoxH13geq0}
\end{eqnarray}
\begin{eqnarray}
z^{\pm\omega} H^{m,n}_{p,q}\left[z
\Bigg|
\begin{array}{rr}
\left(\alpha_j,A_j\right)_1^{p} \\
\left(\beta_j,B_j\right)_1^{q}
\end{array}
\right]=
H^{m,n}_{p,q}\left[z
\Bigg|
\begin{array}{rr}
\left(\alpha_{j}\pm\omega A_{j}, A_{j}\right)_1^{p} \\ 
\left(\beta_{j}\pm\omega B_{j}, B_{j}\right)_1^{q}
\end{array}
\right],
\label{FoxH14geq0}
\end{eqnarray}
for $\omega >0$.

\begin{corollary}
\label{c41}
The following Fox $H$-function is positive on $\mathbb{R}^+$,
\begin{eqnarray}
H^{m'',n''}_{p'',q''}\left[t
\Bigg|
\begin{array}{rr}
\left(\alpha''_{j},A''_{j}\right)_1^{p''} \\  
\left(\beta''_{j},B''_{j}\right)_1^{q''}
\end{array}
\right]> 0, 
\label{FoxH1geq0}
\end{eqnarray}
for every $t>0$, if any of the following sets of conditions hold: $m''=n'$, $n''=m'$, $p''=q'$, $q''=p'$, $\left(\alpha''_{j},A''_{j}\right)_1^{q'}=\left(1-\beta'_{j},B'_{j}\right)_1^{q'}$, $\left(\beta''_{j},B''_{j}\right)_1^{p'}=\left(1-\alpha'_{j},A'_{j}\right)_1^{p'}$; or  
$m''=m'$, $n''=n'$, $p''=p'$, $q''=q'$, $\left(\alpha''_{j},A''_{j}\right)_1^{p'}=\left(\alpha'_{j}, \omega A'_{j}\right)_1^{p'}$, $\left(\beta''_{j},B''_{j}\right)_1^{q'}=\left(\beta'_{j},\omega B'_{j}\right)_1^{q'}$; or  
$m''=m'$, $n''=n'$, $p''=p'$, $q''=q'$, $\left(\alpha''_{j},A''_{j}\right)_1^{p'}=\left(\alpha'_{j}\pm \omega A'_{j},  A'_{j}\right)_1^{p'}$, $\left(\beta''_{j},B''_{j}\right)_1^{q'}=\left(\beta'_{j}\pm \omega B'_{j}, B'_{j}\right)_1^{q'}$; for every $\omega>0$.
\end{corollary}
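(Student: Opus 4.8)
The plan is to treat each of the three sets of conditions in the statement as a direct application of one of the elementary identities (\ref{FoxH12geq0}), (\ref{FoxH13geq0}), (\ref{FoxH14geq0}) to the given positive Fox $H$-function, and then to observe that each such identity expresses the new Fox $H$-function as the composition of the original positive function with a map that preserves positivity on $\mathbb{R}^+$. Throughout, I would write $F(t)$ for the reference Fox $H$-function carrying the primed indices $m',n',p',q'$ and the primed parameters, which is positive on $\mathbb{R}^+$ by hypothesis, and $G(t)$ for the Fox $H$-function (\ref{FoxH1geq0}) determined by whichever set of conditions is under consideration.

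First I would handle the inversion case, corresponding to identity (\ref{FoxH12geq0}) with the index swap $m''=n'$, $n''=m'$, $p''=q'$, $q''=p'$ and the parameter lists $(\alpha''_j,A''_j)_1^{q'}=(1-\beta'_j,B'_j)_1^{q'}$, $(\beta''_j,B''_j)_1^{p'}=(1-\alpha'_j,A'_j)_1^{p'}$. Applying (\ref{FoxH12geq0}) to $F$ with $z=1/t$ identifies the right-hand side of that identity precisely with $G(t)$, so $G(t)=F(1/t)$. Since $t\mapsto 1/t$ maps $\mathbb{R}^+$ bijectively onto itself, positivity of $F$ gives $G(t)>0$ for every $t>0$. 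The two remaining cases proceed by the same mechanism. For the second set of conditions, identity (\ref{FoxH13geq0}) with exponent $\omega>0$ gives $\omega\,G(t)=F(t^{1/\omega})$, hence $G(t)=\omega^{-1}F(t^{1/\omega})$; because $\omega>0$ and $t\mapsto t^{1/\omega}$ preserves $\mathbb{R}^+$, the function $G$ is positive on $\mathbb{R}^+$. For the third set of conditions, identity (\ref{FoxH14geq0}) with $\omega>0$ gives $G(t)=t^{\pm\omega}F(t)$; since $t^{\pm\omega}>0$ for every $t>0$, again $G(t)>0$ on $\mathbb{R}^+$.

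The argument is essentially bookkeeping, and the only point I would verify explicitly rather than leave as routine is the matching, in each of the three cases, of the transformed parameter lists and indices in the condition set against the corresponding identity, so that the object denoted $G(t)$ in (\ref{FoxH1geq0}) is literally the right-hand side produced by (\ref{FoxH12geq0}), (\ref{FoxH13geq0}) or (\ref{FoxH14geq0}). Once that identification is made there is no analytic obstacle: each identity rewrites $G$ as a positive multiple of $F$ evaluated at a point of $\mathbb{R}^+$, and positivity transfers immediately. No separate existence or continuity verification is required, since these properties are inherited from $F$ through the same elementary identities, which hold for every admissible choice of parameters.
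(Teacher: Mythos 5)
Your proposal is correct and follows exactly the paper's own route: each of the three condition sets is handled by the corresponding elementary identity (\ref{FoxH12geq0}), (\ref{FoxH13geq0}), (\ref{FoxH14geq0}), which rewrites the new function as $F(1/t)$, $\omega^{-1}F(t^{1/\omega})$, or $t^{\pm\omega}F(t)$ respectively, so positivity transfers immediately. You merely make explicit the bookkeeping that the paper's one-line proof leaves implicit.
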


\begin{proof}
 Relation (\ref{FoxH1geq0}) is proved to hold for every $t>0$,
under the first set of conditions by identity (\ref{FoxH12geq0});
or under the second set of conditions by identity (\ref{FoxH13geq0});
or under the third set of conditions by identity (\ref{FoxH14geq0}).
\end{proof}

\subsection{Positive Fox $H$-functions via integral transforms 
}\label{41}

 Various integral transforms involving the Fox $H$-function are provided in Refs. \cite{MathSax1978,SriGuGo1982,PrBrMa1990,KilSa2010,MathSaxHau2010} and references therein. Further forms of Fox $H$-functions which are positive on $\mathbb{R}^+$ can be derived from integral transforms involving positive Fox $H$-functions. For the sake of clarity and shortness, we show below few examples of positive Fox $H$-functions which are obtained via the Laplace transform and the Euler transform of positive Fox $H$-functions, and the Mellin transforms of the product of two positive Fox $H$-functions. Again, we consider uniquely real values of the involved variables and parameters as we are interested in positive values of the Fox $H$-functions.

 The following Laplace transform \cite{PrBrMa1990,KilSa2010,MathSaxHau2010}:
\begin{eqnarray}
&&\int_0^{+\infty}
\exp\left(-s \tau\right)\tau^{\omega-1}
H^{m,n}_{p,q}\left[\tau^{\lambda}
\Bigg|
\begin{array}{ll}
\left(\alpha_j,A_j\right)_1^{p} \\  
\left(\beta_j,B_j\right)_1^{q}
\end{array}
\right]d\tau\nonumber \\
&&=
s^{-\omega}  H^{m,n+1}_{p+1,q}\left[s^{-\lambda}
\Bigg|
\begin{array}{ll}
\left(1-\omega,\lambda\right),\left(\alpha_{j},A_{j}\right)_1^{p}\\  
\left(\beta_{j},B_{j}\right)_1^{q}
\end{array}
\right], 
\label{LFoxH1geq0}
\end{eqnarray}
holds for $\left(m n\right),\chi,\lambda,s>0$, and 
\begin{eqnarray}
\omega+\lambda \min_{l=1,\ldots,m}\left\{\frac{\beta_{l}}{B_{l}} \right\}>0. 
\label{cond1LFoxH1geq0}
\end{eqnarray}

\begin{corollary}
\label{c411}
The following Fox $H$-function is positive on $\mathbb{R}^+$,
\begin{eqnarray}
H^{m',n'+k}_{p'+k,q'}\left[t
\Bigg|
\begin{array}{rr}
\left(1-\omega_{k-j+1},\lambda_{k-j+1}\right)_1^{k},\left(\alpha'_{j},A'_{j}\right)_1^{p'} \\
\left(\beta'_{j},B'_{j}\right)_1^{q'}
\end{array}
\right]> 0, 
\label{FoxHkgeq0L}
\end{eqnarray}
for every $t>0$, and $k \in \mathbb{N}$, if $\left(m' n'\right),\chi',\lambda_j>0$, and
\begin{eqnarray}
\omega_j+\lambda_j \min_{l=1,\ldots,m'}\left\{\frac{\beta'_{l}}{B'_{l}} \right\}>0, 
\label{condlLFoxHkgeq0}
\end{eqnarray}
for every $j=1,\ldots,k$. 
\end{corollary}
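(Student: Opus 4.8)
The plan is to argue by induction on $k$, applying the Laplace transform identity (\ref{LFoxH1geq0}) once at each step and verifying that both the Fox $H$-structure and the positivity are preserved. For the base case $k=1$, I would feed the given positive Fox $H$-function with indices $m',n',p',q'$ into the left-hand side of (\ref{LFoxH1geq0}) with $\omega=\omega_1$ and $\lambda=\lambda_1$. Its integrand $\exp(-s\tau)\,\tau^{\omega_1-1}H^{m',n'}_{p',q'}[\tau^{\lambda_1}|\cdots]$ is a product of $\exp(-s\tau)>0$, of $\tau^{\omega_1-1}>0$, and of the Fox $H$-function evaluated at $\tau^{\lambda_1}$; since $\lambda_1>0$ the argument $\tau^{\lambda_1}$ ranges over $\mathbb{R}^+$, where the Fox $H$-factor is strictly positive. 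Hence the integral is strictly positive for every $s>0$, and as the right-hand side equals $s^{-\omega_1}H^{m',n'+1}_{p'+1,q'}[s^{-\lambda_1}|\cdots]$ with $s^{-\omega_1}>0$, the new Fox $H$-function is strictly positive at every point $s^{-\lambda_1}$. Because $s$ runs over $\mathbb{R}^+$ and $\lambda_1>0$, the point $s^{-\lambda_1}$ exhausts $\mathbb{R}^+$, so the new function is positive on all of $\mathbb{R}^+$.

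The inductive step repeats this construction, and the key is to check that the running hypotheses of (\ref{LFoxH1geq0}) are maintained. The identity leaves $m$, $q$ and the whole bottom row $(\beta'_j,B'_j)_1^{q'}$ unchanged, raises $n$ and $p$ by one each, and prepends the single top pair $(1-\omega,\lambda)$. Thus after the $i$-th application the indices are $m',\,n'+i-1,\,p'+i-1,\,q'$, so $(mn)=m'(n'+i-1)\ge m'n'>0$, while an elementary recomputation of (\ref{chi}) shows that the new top entry $\lambda_i$ enters the first $n+1$ terms of the $A$-sum, giving $\chi=\chi'+\lambda_1+\cdots+\lambda_{i-1}>0$. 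Since the bottom row never changes, $\min_{l=1,\ldots,m'}\{\beta'_l/B'_l\}$ is the same at every step, and the convergence condition (\ref{cond1LFoxH1geq0}) at the $i$-th step reads exactly $\omega_i+\lambda_i\min_{l=1,\ldots,m'}\{\beta'_l/B'_l\}>0$, i.e. condition (\ref{condlLFoxHkgeq0}) for $j=i$. Therefore the stated hypotheses $(m'n'),\chi',\lambda_j>0$ and (\ref{condlLFoxHkgeq0}) are precisely what is needed to apply (\ref{LFoxH1geq0}) at each of the $k$ steps, and the positivity argument of the base case carries over verbatim.

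It remains to track the bookkeeping of the top row. Prepending $(1-\omega_i,\lambda_i)$ for $i=1,2,\ldots,k$ in turn produces the block $(1-\omega_k,\lambda_k),(1-\omega_{k-1},\lambda_{k-1}),\ldots,(1-\omega_1,\lambda_1)$ followed by $(\alpha'_j,A'_j)_1^{p'}$; rewriting the $i$-th prepended pair as the $j$-th entry via $i=k-j+1$ yields exactly $(1-\omega_{k-j+1},\lambda_{k-j+1})_1^k$, matching the statement (\ref{FoxHkgeq0L}). The one point deserving care is that each intermediate Fox $H$-function must be positive on the whole of $\mathbb{R}^+$ so that the next integrand is genuinely positive; this is guaranteed inductively, since $\chi>0$ persists at every step (placing each intermediate function in the existence and continuity regime noted after (\ref{chift})) and the strict positivity is propagated by the base-case argument. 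The hard part is thus not any single estimate but the consistent verification that $\chi$, $(mn)$ and the convergence inequality survive all $k$ iterations; once this is in place, the corollary follows from $k$ successive applications of (\ref{LFoxH1geq0}).
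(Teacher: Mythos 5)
Your proof is correct and follows essentially the same route as the paper's: $k$ successive applications of the Laplace transform identity (\ref{LFoxH1geq0}), with the observation that $\chi$ grows by $\lambda_j$ at each step (so $\chi_k=\chi'+\sum_{j=1}^k\lambda_j>0$) and that the unchanged bottom row keeps the convergence condition at step $j$ equal to (\ref{condlLFoxHkgeq0}). Your write-up is in fact more explicit than the paper's about the positivity of the integrand, the preservation of $(mn)>0$, and the index bookkeeping.
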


\begin{proof}
Relation (\ref{FoxHkgeq0L}) is proved to hold for every $t>0$, and $k=1$, by the Laplace transform of a positive Fox $H$-function, which is evaluated via Eq. (\ref{LFoxH1geq0}), under the required conditions. Let $\chi_k$ be the value of the parameter $\chi$ which characterizes the Fox $H$-function involved in relation (\ref{FoxHkgeq0L}), for every $k\in \mathbb{N}$. The following property: $$\chi_k=\chi'+\sum_{j=1}^k \lambda_j> \chi'>0,$$ 
provides $\chi_k>0$, for every $k\in \mathbb{N}$, as it is required for the iterative application of the Laplace transform (\ref{cond1LFoxH1geq0}). Thus, relations (\ref{FoxHkgeq0L}) and (\ref{condlLFoxHkgeq0}) are obtained by using iteratively, $k$ times, the Laplace transform (\ref{LFoxH1geq0}) and constraint (\ref{cond1LFoxH1geq0}), for every $k\in \mathbb{N}$.
\end{proof}

The following Euler transform \cite{PrBrMa1990,KilSa2010,MathSaxHau2010}:
\begin{eqnarray}
&&\hspace{-2em}\int_0^{t}
\tau^{\omega_1-1}\left(t-\tau\right)^{\lambda_1-1}
H^{m,n}_{p,q}\left[\zeta \tau^{\omega_2}
\left(t-\tau\right)^{\lambda_2}
\Bigg|
\begin{array}{rr}
\left(\alpha_j,A_j\right)_1^{p} \\  
\left(\beta_j,B_j\right)_1^{q}
\end{array}
\right] d\tau
=t^{\omega_1+\lambda_1-1}  
\nonumber \\&&\hspace{-2em} \times \,H^{m,n+2}_{p+2,q+1}\left[\zeta t^{\omega_2+\lambda_2}
\Bigg|
\begin{array}{rr}
\left(1-\omega_1,\omega_2\right),\left(1-\lambda_1,\lambda_2\right),\left(\alpha_{j},A_{j}\right)_1^{p} \\ 
\left(\beta_{j},B_{j}\right)_1^{q},\left(1-\omega_1-\lambda_1,\omega_2+\lambda_2\right)
\end{array}
\right], 
\label{EulerFoxHgeq0}
\end{eqnarray}
holds for every $\lambda_1\in \mathbb{R}$, $\omega_2,\lambda_2 \geq 0$, $\left(m n\right),\chi,\zeta,t>0$, and 
\begin{eqnarray}
&&\omega_1+\omega_2 \min_{l=1,\ldots,m}
\left\{\frac{\beta_{l}}{B_{l}}\right\}>0, 
\label{cond1EFoxHgeq0}\\
&&\omega_1+\lambda_2 \min_{l=1,\ldots,m}
\left\{\frac{\beta_{l}}{B_{l}}\right\}>0. 
\label{cond2EFoxHgeq0}
\end{eqnarray}

\begin{corollary}
\label{c412}
The following Fox $H$-function is positive on $\mathbb{R}^+$,
\begin{eqnarray}
&&H^{m',n'+2k}_{p'+2k,q'+k}\left[t
\Bigg|
\begin{array}{rr}
\left(\alpha''_j,A''_j\right)_1^{p'+2k} \\
\left(\beta'_j,B'_j\right)_1^{q'}, \left(1-\omega_{2j-1}-\lambda_{2j-1},\omega_{2j}+\lambda_{2j}\right)_1^k
\end{array}
\right]> 0, \nonumber \\&&
\label{FoxHgeq0E}
\end{eqnarray}
for every $t>0$, and $k\in \mathbb{N}$, if $\left(m' n'\right),\chi'>0$,
\begin{eqnarray}
&&\hspace{-2em}\left(\alpha''_j,A''_j\right)_1^{2k}= \left(
\left(1-\omega_{2\left(k-j\right)+1},\omega_{2\left(k-j+1\right)}\right),\left(1-\lambda_{2\left(k-j\right)+1},\lambda_{2\left(k-j+1\right)}\right)\right)_1^k, \nonumber
\\
&& \label{aa12euler}\\
&&\hspace{-2em}
\left(\alpha''_j,A''_j\right)_{2k+1}^{p'+2k}= \left(\alpha'_j,A'_j\right)_{1}^{p'}, \nonumber
\end{eqnarray}
$\omega_{2j},\lambda_{2j}>0$, and
\begin{eqnarray}
&&\omega_{2j-1}+\omega_{2j} \min_{
l=1,\ldots,m'}\left\{\frac{\beta'_{l}}{B'_{l}}\right\}>0, 
\label{cond1jEFoxHgeq0}\\
&&\omega_{2j-1}+\lambda_{2j} \min_{l=1,\ldots,m'}\left\{\frac{\beta'_{l}}{B'_{l}}\right\}>0,
\label{cond2jEFoxHgeq0}
\end{eqnarray}
for every $j=1,\ldots,k$. 
\end{corollary}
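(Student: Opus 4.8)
The plan is to argue by induction on $k$, in close analogy with the proof of Corollary \ref{c411}, using the Euler transform (\ref{EulerFoxHgeq0}) as the single step that turns one positive Fox $H$-function into another. For the base case $k=1$, I would start from a Fox $H$-function $H^{m',n'}_{p',q'}$ that is positive on $\mathbb{R}^+$ (for instance the function (\ref{FoxHft}) under the e.p. conditions) and apply (\ref{EulerFoxHgeq0}) with $\zeta>0$ arbitrary (say $\zeta=1$) and with $\omega_1=\omega_{1}$, $\omega_2=\omega_2$, $\lambda_1=\lambda_1$, $\lambda_2=\lambda_2$ taken from the first iteration, under the admissibility requirements $\omega_2,\lambda_2>0$, $(m'n'),\chi'>0$, and conditions (\ref{cond1EFoxHgeq0})--(\ref{cond2EFoxHgeq0}), which are precisely (\ref{cond1jEFoxHgeq0})--(\ref{cond2jEFoxHgeq0}) for $j=1$.

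The positivity of the output then follows from inspecting the left-hand side of (\ref{EulerFoxHgeq0}). For $0<\tau<t$ the factors $\tau^{\omega_1-1}$ and $(t-\tau)^{\lambda_1-1}$ are strictly positive, and the argument $\zeta\tau^{\omega_2}(t-\tau)^{\lambda_2}$ is strictly positive because $\zeta>0$, $\tau>0$, $t-\tau>0$ and $\omega_2,\lambda_2\geq 0$; hence the Fox $H$-function in the integrand, being positive on all of $\mathbb{R}^+$ by hypothesis, is evaluated at a positive point and is itself positive. The integrand is therefore strictly positive on $(0,t)$, the integral is strictly positive, and since $t^{\omega_1+\lambda_1-1}>0$ the transformed function $H^{m',n'+2}_{p'+2,q'+1}$ on the right-hand side is positive on $\mathbb{R}^+$. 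Because positivity on $\mathbb{R}^+$ is invariant under the positive scaling $\zeta>0$ of the argument, the displayed argument $t$ in (\ref{FoxHgeq0E}) entails no loss of generality.

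For the inductive step I would check that the output of each application again satisfies the hypotheses needed to reapply (\ref{EulerFoxHgeq0}), now with $\omega_1=\omega_{2j-1}$, $\omega_2=\omega_{2j}$, $\lambda_1=\lambda_{2j-1}$, $\lambda_2=\lambda_{2j}$ at step $j$. The key bookkeeping is that the Euler transform leaves $m$ unchanged, raises $n$ by two, and appends its new lower pair $(1-\omega_1-\lambda_1,\omega_2+\lambda_2)$ at the end of the $\beta$-list; consequently the first $m'$ lower parameters remain equal to $(\beta'_l,B'_l)_1^{m'}$ throughout, so the quantity $\min_{l=1,\ldots,m'}\{\beta'_l/B'_l\}$ entering (\ref{cond1jEFoxHgeq0})--(\ref{cond2jEFoxHgeq0}) never changes, and these conditions for $j=1,\ldots,k$ are exactly what the successive stages require. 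A short computation with (\ref{chi}) shows that the two $A$-values $\omega_2,\lambda_2$ added to the ``$n$''-block and the single $B$-value $\omega_2+\lambda_2$ added beyond the ``$m$''-block cancel, so $\chi$ is invariant under the Euler transform; hence $\chi$ stays equal to $\chi'>0$ at every step, in contrast to the Laplace case of Corollary \ref{c411} where $\chi$ grows. Together with $m=m'>0$ and $n\geq n'+2>0$ this guarantees $(mn),\chi>0$ at each iteration, so (\ref{EulerFoxHgeq0}) may be applied $k$ times. Finally, tracking the prepending of the two new upper pairs $(1-\omega_1,\omega_2),(1-\lambda_1,\lambda_2)$ at the front of the $\alpha$-list accounts for the reversed indexing in (\ref{aa12euler}), and the accumulated lower pairs produce the block $(1-\omega_{2j-1}-\lambda_{2j-1},\omega_{2j}+\lambda_{2j})_1^k$ of (\ref{FoxHgeq0E}).

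The main obstacle I anticipate is not the positivity step, which is immediate once the integrand is recognized as positive, but the verification that the admissibility hypotheses of (\ref{EulerFoxHgeq0}) propagate through the iteration: namely, that $\chi$ is invariant and that the ratios $\beta_l/B_l$ appearing in the min-conditions remain the original $\beta'_l/B'_l$, so that the finitely many conditions (\ref{cond1jEFoxHgeq0})--(\ref{cond2jEFoxHgeq0}) indexed by $j=1,\ldots,k$ are genuinely sufficient at every stage. The remaining work is the routine index and parameter accounting needed to match the output after $k$ steps with the Fox $H$-function displayed in (\ref{FoxHgeq0E}) and (\ref{aa12euler}).
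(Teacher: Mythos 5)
Your proposal is correct and follows essentially the same route as the paper: iterate the Euler transform (\ref{EulerFoxHgeq0}) $k$ times starting from a positive Fox $H$-function, with positivity read off from the positive integrand and admissibility preserved because $\chi_k=\chi'>0$ at every step. Your write-up actually supplies more of the bookkeeping (the invariance of $\chi$, the stability of the ratios $\beta'_l/B'_l$ in the min-conditions, and the index accounting) than the paper's own terse proof does.
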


\begin{proof}
Relation (\ref{FoxHgeq0E}) is proved to hold for every $t>0$, and $k=1$, by the Euler transform of a positive Fox $H$-function, which is evaluated via Eq. (\ref{EulerFoxHgeq0}) under the required conditions. Let $\chi_k$ be the value of the parameter $\chi$ which characterizes the Fox $H$-function involved in relation (\ref{FoxHgeq0E}), for every $k\in \mathbb{N}$. The following property holds: $\chi_k= \chi'>0$, 
for every $k\in \mathbb{N}$, as it is required for the iterative application of the Euler transform (\ref{EulerFoxHgeq0}). Thus, relations (\ref{FoxHgeq0E}) - (\ref{cond2jEFoxHgeq0}) are obtained by using iteratively, $k$ times, the Euler transform (\ref{EulerFoxHgeq0}) and constraints (\ref{cond1EFoxHgeq0}) and (\ref{cond2EFoxHgeq0}), for every $k\in \mathbb{N}$.
\end{proof}

The following Mellin transform of the product of two Fox $H$-functions \cite{KilSa2010,MathSaxHau2010}:
\begin{eqnarray}
&&\hspace{-2em}\int_0^{+\infty}
  \tau^{\omega-1}
H^{m,n}_{p,q}\left[\xi \tau^{\lambda}
\Bigg|
\begin{array}{rr}
\left(\alpha_j,A_j\right)_1^{p} \\  
\left(\beta_j,B_j\right)_1^{q}
\end{array}
\right]
H^{m'',n''}_{p'',q''}\left[\zeta \tau
\Bigg|\begin{array}{rr}
\left(\alpha''_{j},A''_{j}\right)_1^{p''} \\ 
\left(\beta''_{j},B''_{j}\right)_1^{q''}
\end{array}
\right]d\tau 
\nonumber \\&&\hspace{-2em} =   \zeta^{-\omega} H^{m+n'',
n+m''}_{p+q'',q+p''}\left[\xi \zeta^{-\lambda} 
\Big|
\begin{array}{rr}
\left(\alpha'''_{j},A'''_{j}\right)_1^{p+q''}  \\ 
\left(\beta'''_{j},B'''_{j}\right)_1^{q+p''}
\end{array}
\right],
\label{ProdFoxH}
\end{eqnarray}
holds if $\chi,\chi'',\lambda, \xi, \zeta>0$, and
\begin{eqnarray}
&&- \min_{l=1,\ldots,m}\left\{\frac{\beta_{l}}{B_{l}}, \right\}< \min_{l=1,\ldots,n}\left\{\frac{1-\alpha_{l}}{A_{l}} \right\}, \label{cond1FoxHH}\\
&&-\min_{l=1,\ldots,m''}\left\{\frac{\beta''_{l}}{B''_{l}} \right\}<\min_{l=1,\ldots,n''}\left\{\frac{1-\alpha''_{l}}{A''_{l}} \right\}. 
\label{cond2FoxHH}
\end{eqnarray}
The values of the parameter $\omega$ are determined by the following constraints:
\begin{eqnarray}
&&-\lambda \min_{l=1,\ldots,m}\left\{\frac{\beta_{l}}{B_{l}}, \right\}-\min_{l=1,\ldots,m''}\left\{\frac{\beta''_{l}}{B''_{l}}, \right\}<\omega
\nonumber \\&&<
\lambda \min_{l=1,\ldots,n}\left\{\frac{1-\alpha_{l}}{A_{l}} \right\}+\min_{l=1,\ldots,n''}\left\{\frac{1-\alpha''_{l}}{A''_{l}} \right\}.
\label{cond3FoxHH} 
\end{eqnarray}
The involved parameters are
\begin{eqnarray}
&&\hspace{-4em}\left(\alpha'''_{j},A'''_{j}\right)_1^{p+q''}\equiv \left(\alpha_{j},A_{j}\right)_1^{n},
\left(1-\beta''_{j}-\omega 
B''_{j},\lambda B''_{j}\right)_1^{q''}, 
\left(\alpha_{j},A_{j}\right)_{n+1}^{p} , \label{a3A3}\\&& \hspace{-4em}\left(\beta'''_{j},B'''_{j}\right)_1^{q+p''}\equiv \left(\beta_{j},B_{j}\right)_1^{m},\left(1-\alpha''_{j}-\omega 
A''_{j},\lambda A''_{j}\right)_1^{p''}, 
\left(\beta_{j},B_{j}\right)_{m+1}^{q}, \label{b3B3}\\
&&\hspace{-4em}\chi''=\sum_{j=1}^{n''} A''_j-\sum_{j=n''+1}^{p''} A''_j
+\sum_{j=1}^{m''} B''_j-\sum_{j=m''+1}^{q''} B''_j. \label{chi''} 
 \end{eqnarray}
Let the indexes $m'',n'',p'',q'',$ and the parameters $\alpha''_1, \ldots,\alpha''_{p''}$, $A''_1, \ldots,A''_{p''}$, $\beta''_1, \ldots,\beta''_{q''}$, $B''_1, \ldots$, $B''_{q''}$, characterize a general Fox $H$-function which is positive on $\mathbb{R}^+$, and let the parameter $\chi''$ be positive, $\chi''>0$. Then, the corollary reported below holds.
\begin{corollary}
\label{c413}
The following Fox $H$-function is positive on $\mathbb{R}^+$,
\begin{eqnarray}
H^{m'+n'',n'+m''}_{p'+q'',q'+p''}\left[t 
\Bigg|
\begin{array}{ll}
\left(\alpha'''_{j},A'''_{j}\right)_1^{p'+q''} \\ 
\left(\beta'''_{j},B'''_{j}\right)_1^{q'+p''}
\end{array}
\right]> 0,
\label{ProdFoxHgeq0}
\end{eqnarray} 
for every $t>0$, if the involved parameters are given by Eqs. (\ref{a3A3}) and (\ref{b3B3}) with indexes $m=m'$, $n=n'$, $p=p'$, $q=q'$, and parameters $\left(\alpha_{j},A_{j}\right)_1^{p'}= \left(\alpha'_{j},A'_{j}\right)_1^{p'}$, $\left(\beta_{j},B_{j}\right)_1^{q'}= \left(\beta'_{j},B'_{j}\right)_1^{q'}$. Constraints (\ref{cond1FoxHH}) and (\ref{cond2FoxHH}) are required to hold for the involved indexes and parameters.
\end{corollary}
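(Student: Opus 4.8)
The plan is to read the reduction formula (\ref{ProdFoxH}) as an identity between a manifestly positive integral and the target Fox $H$-function, and then to let the free scale parameter sweep the argument across all of $\mathbb{R}^+$.

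First I would fix the identification prescribed in the statement: the first factor in (\ref{ProdFoxH}), carrying indexes $m=m'$, $n=n'$, $p=p'$, $q=q'$ and the primed parameters, is the given positive Fox $H$-function, while the second factor, carrying the double-primed data with $\chi''>0$, is the general positive Fox $H$-function introduced before the statement. Both factors are positive on $\mathbb{R}^+$, and since $\xi,\zeta,\lambda>0$ the arguments $\xi\tau^{\lambda}$ and $\zeta\tau$ are positive for every $\tau>0$; hence each Fox $H$-function under the integral sign is strictly positive there. Multiplying by the positive weight $\tau^{\omega-1}$, the whole integrand on the left-hand side of (\ref{ProdFoxH}) is positive for every $\tau>0$, so the integral is strictly positive wherever it converges.

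Next I would confirm that the hypotheses both guarantee convergence and admit a choice of $\omega$. Constraints (\ref{cond1FoxHH}) and (\ref{cond2FoxHH}) are the existence conditions for the two factors, and they force the $\omega$-interval (\ref{cond3FoxHH}) to be nonempty: its length equals $\lambda\big(\min_l\{\beta_l/B_l\}+\min_l\{(1-\alpha_l)/A_l\}\big)+\big(\min_l\{\beta''_l/B''_l\}+\min_l\{(1-\alpha''_l)/A''_l\}\big)$, and each bracket is positive by (\ref{cond1FoxHH}) and (\ref{cond2FoxHH}) respectively. Fixing any admissible $\omega$, the identity (\ref{ProdFoxH}) holds for all $\xi,\zeta>0$; taking $\zeta=1$ in particular, the prefactor $\zeta^{-\omega}$ becomes unity and the right-hand side reduces to the target Fox $H$-function, with parameters (\ref{a3A3})-(\ref{b3B3}), evaluated at $\xi$. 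Therefore this Fox $H$-function equals a strictly positive integral for every $\xi>0$.

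Finally, since $\xi$ ranges over all of $\mathbb{R}^+$, the argument attains every value $t>0$, which yields (\ref{ProdFoxHgeq0}). The positivity itself is thus immediate once the reduction formula is in force; the only genuinely delicate point is the bookkeeping needed to invoke (\ref{ProdFoxH}) verbatim, namely checking that with the chosen identification the hypotheses $\chi=\chi'>0$ and $\chi''>0$ are met and that the parameter strings (\ref{a3A3})-(\ref{b3B3}) are correctly assembled from the primed and double-primed data. I expect this matching, rather than the sign argument, to be where care is required.
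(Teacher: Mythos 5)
Your proposal is correct and follows essentially the same route as the paper's own proof: both factors on the left of Eq.~(\ref{ProdFoxH}) are positive on $\mathbb{R}^+$, so the integral is positive, and constraints (\ref{cond1FoxHH})--(\ref{cond2FoxHH}) guarantee a nonempty admissible range (\ref{cond3FoxHH}) for $\omega$. Your additional bookkeeping (setting $\zeta=1$ and letting $\xi$ sweep over $\mathbb{R}^+$) only makes explicit what the paper leaves implicit.
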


\begin{proof}
  Both the Fox $H$-functions appearing in the left side of Eq. (\ref{ProdFoxH}) are chosen to be positive on $\mathbb{R}^+$. Thus, relation (\ref{ProdFoxHgeq0}) holds for every $t>0$ if the above-required constraints are fulfilled. Particularly, conditions (\ref{cond1FoxHH}) and (\ref{cond2FoxHH}) realize relation (\ref{cond3FoxHH}) which determines the values of the parameter $\omega$.
\end{proof}

The following Mellin transform of the product of two Fox $H$-functions \cite{KilSa2010,MathSaxHau2010}:
\begin{eqnarray}
&&\hspace{-2em}\int_0^{+\infty}
  \tau^{\omega-1}
H^{m,n}_{p,q}\left[\xi \tau^{-\lambda}
\Bigg|
\begin{array}{rr}
\left(\alpha_j,A_j\right)_1^{p} \\  
\left(\beta_j,B_j\right)_1^{q}
\end{array}
\right]
H^{m'',n''}_{p'',q''}\left[\zeta \tau
\Bigg|
\begin{array}{rr}
\left(\alpha''_{j},A''_{j}\right)_1^{p''} \\ 
\left(\beta''_{j},B''_{j}\right)_1^{q''}
\end{array}
\right]d\tau 
\nonumber \\&&\hspace{-2em} =   \zeta^{\omega} H^{m+m'',
n+n''}_{p+p'',q+q''}\left[\xi \zeta^{\lambda} 
\Bigg|
\begin{array}{rr}
\left(\alpha'''_{j'},A'''_{j}\right)_1^{p+p''}  \\ 
\left(\beta'''_{j},B'''_{j}\right)_1^{q+q''}
\end{array}
\right],
\label{ProdFoxH2}
\end{eqnarray}
holds if $\chi,\chi'',\lambda, \xi, \zeta>0$, and conditions (\ref{cond1FoxHH}) and (\ref{cond2FoxHH}) are fulfilled. The values of the parameter $\omega$ are determined by constraint 
(\ref{cond3FoxHH}). The involved parameters are
\begin{eqnarray}
&&\hspace{-4em}\left(\alpha'''_{j},A'''_{j}\right)_1^{p+p''}\equiv 
\left(\alpha_{j},A_{j}\right)_1^{n},
\left(\alpha''_{j}+\omega A''_{j},\lambda A''_{j}\right)_1^{p''}, 
\left(\alpha_{j},A_{j}\right)_{n+1}^{p} , \label{a3A32}\\&& \hspace{-4em}\left(\beta'''_{j},B'''_{j}\right)_1^{q+q''}\equiv \left(\beta_{j},B_{j}\right)_1^{m},
\left(\beta''_{j}+\omega B''_{j},\lambda B''_{j}\right)_1^{q''},
\left(\beta_{j},B_{j}\right)_{m+1}^{q}. \label{b3B32}
\end{eqnarray}

\begin{corollary}
\label{c414}
The following Fox $H$-function is positive on $\mathbb{R}^+$,
\begin{eqnarray}
H^{m'+m'',n'+n''}_{p'+p'',q'+q''}\left[t 
\Bigg|
\begin{array}{rr}
\left(\alpha'''_{j},A'''_{j}\right)_1^{p+p''} \\ 
\left(\beta'''_{j},B'''_{j}\right)_1^{q+q''}
\end{array}
\right]> 0,
\label{Prod2FoxHgeq0}
\end{eqnarray} 
for every $t>0$, if the involved parameters are given by Eqs. (\ref{a3A32}) and (\ref{b3B32}) with indexes $m=m'$, $n=n'$, $p=p'$, $q=q'$, and parameters $\left(\alpha_{j},A_{j}\right)_1^{p'}= \left(\alpha'_{j},A'_{j}\right)_1^{p'}$, $\left(\beta_{j},B_{j}\right)_1^{q'}= \left(\beta'_{j},B'_{j}\right)_1^{q'}$. Again, constraints (\ref{cond1FoxHH}) and (\ref{cond2FoxHH}) are required to hold for the above-reported values of the involved indexes and parameters.
\end{corollary}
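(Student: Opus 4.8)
The plan is to reproduce the argument of Corollary \ref{c413}, now built on the companion identity (\ref{ProdFoxH2}) in place of (\ref{ProdFoxH}). The whole point is that, under the stated hypotheses, the left-hand side of (\ref{ProdFoxH2}) is manifestly a positive number, and the identity transfers this positivity to the output Fox $H$-function on the right.

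First I would fix an arbitrary $t>0$ and select positive constants $\xi,\zeta,\lambda$ with $\xi\zeta^{\lambda}=t$; the simplest choice is $\zeta=1$, $\xi=t$. Identifying the unprimed Fox $H$-function in (\ref{ProdFoxH2}) with the general positive function of (\ref{FoxHgeq0}) through $m=m'$, $n=n'$, $p=p'$, $q=q'$ and $(\alpha_j,A_j)_1^{p'}=(\alpha'_j,A'_j)_1^{p'}$, $(\beta_j,B_j)_1^{q'}=(\beta'_j,B'_j)_1^{q'}$, its argument $\xi\tau^{-\lambda}$ stays positive for every $\tau>0$ because $\xi,\lambda>0$; hence this factor is positive on $(0,+\infty)$. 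Likewise the second, doubly primed Fox $H$-function is positive on $\mathbb{R}^+$ by assumption (exactly as in Corollary \ref{c413}), and its argument $\zeta\tau$ is positive for $\tau>0$. Since the weight $\tau^{\omega-1}$ is also positive for $\tau>0$, the entire integrand in (\ref{ProdFoxH2}) is strictly positive on $(0,+\infty)$.

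Next I would invoke the existence part of the identity: the hypotheses $\chi,\chi''>0$ together with (\ref{cond1FoxHH}) and (\ref{cond2FoxHH}) guarantee that the admissible $\omega$-interval (\ref{cond3FoxHH}) is nonempty and that the integral converges, so (\ref{ProdFoxH2}) may legitimately be applied. The integral is therefore a strictly positive real number, and by (\ref{ProdFoxH2}) it equals $\zeta^{\omega}$ times the output Fox $H$-function evaluated at $\xi\zeta^{\lambda}=t$, with parameters read off from (\ref{a3A32}) and (\ref{b3B32}). Because $\zeta^{\omega}>0$, the output function is positive at $t$, and since $t>0$ was arbitrary, relation (\ref{Prod2FoxHgeq0}) holds for every $t>0$.

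I expect the only genuinely delicate step to be bookkeeping rather than analysis, precisely as in Corollary \ref{c413}: one must confirm that the parameter lists (\ref{a3A32}) and (\ref{b3B32}) really do yield a bona fide Fox $H$-function, i.e. that the pole-separation constraint (\ref{cond1}) is met and the associated $\chi$-parameter is positive, so that the function on the right exists and is continuous on $\mathbb{R}^+$. Granting this, the strict positivity of the integrand does all the remaining work, and no further computation is required.
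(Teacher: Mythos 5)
Your proposal is correct and follows essentially the same route as the paper: the paper proves Corollary \ref{c414} by noting it is analogous to Corollary \ref{c413}, i.e.\ both Fox $H$-functions in the integrand of (\ref{ProdFoxH2}) are positive on $\mathbb{R}^+$, so the integral is positive and this positivity transfers to the $H$-function on the right-hand side, with (\ref{cond1FoxHH}) and (\ref{cond2FoxHH}) ensuring a nonempty admissible range (\ref{cond3FoxHH}) for $\omega$. Your version merely makes explicit the choice $\zeta=1$, $\xi=t$ and the convergence and bookkeeping points that the paper leaves implicit.
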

The proof of corollary \ref{c414} is similar to the above-reported proof of corollary \ref{c413}. 

The few above-reported examples show how further forms of Fox $H$-functions which are positive on $\mathbb{R}^+$ can be determined from positive Fox $H$-functions by adopting the elementary properties and the integral transforms of one or the product of two positive Fox $H$-functions.

\section{Particular cases}\label{6}
In the present Section, we consider few examples of special functions which are particular cases of the Fox $H$-function. For the sake of clarity, we show how the e.p. conditions presented in Section \ref{3} and the relations reported 
in Section \ref{4} provide further examples of special functions which are positive on $\mathbb{R}^+$. 

The Wright generalized hypergeometric function is a special case of the Fox $H$-function which exists for every nonvanishing complex value of the variable $z$ if $\mu>-1$ \cite{Wrightdef,KilSa2010,MathSaxHau2010,Luc2019},
\begin{eqnarray}
{}_p W_{q-1}\left[-z\Bigg|
\begin{array}{ll}
\left(\alpha_j, A_j\right)_1^{p}\\
\left(\beta_j, B_j\right)_1^{q-1}
\end{array}
\right]=H_{p,q}^{1,p}\left[z
\Bigg|
\begin{array}{rr}
\left(1-\alpha_j,A_j\right)_1^{p}\\  \left(0,1\right),
\left(1-\beta_j,B_j\right)_1^{q-1}
\end{array}
\right]. \label{WrightFoxH}
\end{eqnarray}
\begin{corollary}
\label{WrightP}
The following Wright generalized hypergeometric function is positive on $\mathbb{R}^+$:
 \begin{eqnarray}
{}_{p'} W_{p'}\left[-t\Bigg|
\begin{array}{rr}
\left(v_{j},a'''_{j}\right)_1^{p'}\\
\left(w_{j},a'''_{j}\right)_1^{p'}
\end{array}
\right]>0, \label{WrightFoxHp1}
\end{eqnarray}
for every $t>0$, if the involved parameters $\left(v_{j},a'''_{j}\right)_1^{p'}$ and $\left(w_{j},a'''_{j}\right)_1^{p'}$ fulfill the e.p. conditions with the parameters $a_1=1$, $b_1=0$ and the indexes $n_1=1$, $n_2=n_3=0$, $n_4=p'$. 
\end{corollary}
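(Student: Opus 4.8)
The plan is to recognize Corollary~\ref{WrightP} as a direct specialization of Theorem~\ref{Hpos} composed with the Fox~$H$-representation~\eqref{WrightFoxH} of the Wright function. First I would substitute the prescribed index choice $n_1=1$, $n_2=n_3=0$, $n_4=p'$ into the index formulas~\eqref{mp}--\eqref{qp}, obtaining $m'=n_1=1$, $n'=n_4=p'$, $p'=n_4=p'$, and $q'=n_1+n_4=p'+1$. I would then insert these together with the parameter assignments $a_1=1$, $b_1=0$ into the parameter lists~\eqref{FoxHftA}--\eqref{FoxHftB}. Since $n_2=n_3=0$ kills the $\Phi$ and $\Psi$ blocks, the upper list~\eqref{FoxHftA} reduces to $\left(1-v_j,a'''_j\right)_1^{p'}$ and the lower list~\eqref{FoxHftB} reduces to $\left(b_1,a_1\right)=\left(0,1\right)$ followed by $\left(1-w_j,a'''_j\right)_1^{p'}$. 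The key observation is that this is exactly the Fox~$H$-function appearing on the right-hand side of~\eqref{WrightFoxH} once one reads off $\alpha_j,\beta_j,A_j,B_j$ from the correspondence; matching~\eqref{FoxHftA}--\eqref{FoxHftB} against the right side of~\eqref{WrightFoxH} forces $p\mapsto p'$, $q\mapsto p'+1$, $\left(1-\alpha_j,A_j\right)=\left(1-v_j,a'''_j\right)$ and $\left(1-\beta_j,B_j\right)=\left(1-w_j,a'''_j\right)$, i.e.\ the Wright parameters are precisely $\alpha_j=v_j$, $\beta_j=w_j$ with all scale parameters equal to $a'''_j$.

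Next I would verify that condition~\eqref{condn1n2n3n4} holds: since $n_1=1\geq 1$, the hypothesis of Theorem~\ref{Hpos} is satisfied and $\chi'=\sum_{j=1}^{n_1}a_j+2\sum_{j=1}^{n_3}a''_j=a_1=1>0$ by~\eqref{chift}. With the e.p.\ conditions assumed in the hypothesis, Theorem~\ref{Hpos} then asserts that the Fox~$H$-function~\eqref{FoxHft} with these specialized indexes and parameters is strictly positive on $\mathbb{R}^+$. I would then invoke the identity~\eqref{WrightFoxH}, which equates this Fox~$H$-function with the Wright function ${}_{p'}W_{p'}$ evaluated at $-t$ with the stated parameters. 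Because the identity is an exact equality of functions (valid wherever $\mu>-1$, which I would check reduces here to a condition automatically met by the positivity of $\chi'$ and the Wright existence criterion), the strict positivity transfers verbatim, yielding~\eqref{WrightFoxHp1}.

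The main obstacle I anticipate is the bookkeeping of which Wright parameter maps to which Fox~$H$ slot, and in particular confirming that the extra $\left(0,1\right)$ pair in the lower list of~\eqref{WrightFoxH} is exactly the pair $\left(b_1,a_1\right)=\left(0,1\right)$ produced by the $n_1=1$, $a_1=1$, $b_1=0$ choice, so that the Wright function's "$+1$" in $q$ is accounted for by the single $\varphi$ factor rather than by an artificial padding. A secondary point requiring care is checking that the existence hypothesis $\mu>-1$ of~\eqref{WrightFoxH} is compatible with the e.p.\ conditions; I would compute $\mu=\sum_{j=1}^{q}B_j-\sum_{j=1}^{p}A_j$ for the specialized function, find $\mu=\bigl(a_1+\sum_j a'''_j\bigr)-\sum_j a'''_j=a_1=1>-1$, and conclude that~\eqref{WrightFoxH} applies. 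Once these two matchings are pinned down, the corollary follows immediately and no genuine analytic work beyond citing Theorem~\ref{Hpos} and the representation~\eqref{WrightFoxH} is needed.
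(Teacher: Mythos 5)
Your proposal is correct and follows essentially the same route as the paper: specialize the e.p.\ construction to $n_1=1$, $n_2=n_3=0$, $n_4=p'$ with $a_1=1$, $b_1=0$, match the resulting $H^{1,p'}_{p',p'+1}$ against the right-hand side of \eqref{WrightFoxH}, and transfer positivity from Theorem~\ref{Hpos}. The only cosmetic difference is direction: the paper derives these index values by solving $m'=1$, $n'=p=p'$ backwards (and discards $q'=p'$ via $\mu=-1$), whereas you verify the given choice forwards; the logical content is identical.
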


\begin{proof}
Relation (\ref{WrightFoxHp1}) is proved to hold for every $t>0$, if the Fox $H$-function appearing in Eq. (\ref{WrightFoxH}) is positive on $\mathbb{R}^+$. 
The e.p. conditions provide Fox $H$-functions which are characterized by indexes $m=m'=1$, $n=n'=p=p'$, $q=q'$ and are positive on $\mathbb{R}^+$. The corresponding natural numbers 
$n_1$, $n_2$, $n_3$, $n_4$ are given by the following values:
$n_1=q'-p'$, $n_2=0$, $n_3=p'-q'+1$, $n_4=q'-1$. Condition (\ref{condn1n2n3n4}) provides $q'=p'+1$, or $q'=p'$. The latter case, 
$q'=p'$, is discarded as $\mu=-1$. The former case, $q'=p'+1$,  provides $a_1=1$ and $b_1=0$, and Eq. (\ref{WrightFoxH}) leads to relation (\ref{WrightFoxHp1}). The corresponding Wright generalized hypergeometric function is characterized by $\mu=0>-1$. 
\end{proof}

\begin{corollary}
\label{WrightPk}
The following Wright generalized hypergeometric function is positive on $\mathbb{R}^+$:
 \begin{eqnarray}
&&{}_{p'+k} W_{p'}\left[-t\Bigg|
\begin{array}{ll}\left(\omega_{k-j+1},\lambda_{k-j+1}\right)_1^{k},
\left(v_{j},a'''_{j}\right)_1^{p'}\\
\left(w_{j},a'''_{j}\right)_1^{p'}
\end{array}
\right]>0, \label{WrightFoxHp2}
\end{eqnarray}
for every $t>0$, if the involved parameters $\left(v_{j},a'''_{j}\right)_1^{p'}$ and $\left(w_{j},a'''_{j}\right)_1^{p'}$ fulfill the e.p. conditions with the parameters $a_1=1$, $b_1=0$, and the indexes $n_1=1$, $n_2=n_3=0$, $n_4=p'$. Additionally, it is required that $\omega_j,\lambda_j>0$, for every $j=1,\ldots,k$, and $\sum_{l=1}^k\lambda_l<1$, for every $k \in \mathbb{N}$.
\end{corollary}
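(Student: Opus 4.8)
The plan is to deduce Corollary \ref{WrightPk} by combining Corollary \ref{WrightP} with Corollary \ref{c411}, that is, by viewing the $k$-fold Laplace transform as the operation that inserts the $k$ extra upper parameter pairs into a positive Wright function. First I would fix the starting object: by Corollary \ref{WrightP}, under the stated e.p.\ conditions with $a_1=1$, $b_1=0$, $n_1=1$, $n_2=n_3=0$, $n_4=p'$, the Wright function ${}_{p'}W_{p'}$ in \refeq{WrightFoxHp1} is positive on $\mathbb{R}^+$, and by identity \refeq{WrightFoxH} it equals a positive Fox $H$-function $H^{1,p'}_{p',p'+1}$ with $m'=1$, $n'=p'=p'$, $q'=p'+1$. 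This is precisely the kind of positive Fox $H$-function to which the Laplace-transform Corollary \ref{c411} applies, provided its hypotheses $\left(m'n'\right),\chi',\lambda_j>0$ and the growth condition \refeq{condlLFoxHkgeq0} are verified.

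Second I would check those hypotheses in the present situation. We have $m'n'=p'>0$ (for $p'\ge 1$), and by \refeq{chift} the parameter $\chi'=a_1=1>0$, so both positivity requirements on the indices hold. The requirement $\lambda_j>0$ is assumed. The key translation is condition \refeq{condlLFoxHkgeq0}: here $m'=1$ and the single lower parameter of the Fox $H$-function in \refeq{WrightFoxH} that contributes to the minimum is $(0,1)$, so $\min_{l}\{\beta'_l/B'_l\}=0$, whence \refeq{condlLFoxHkgeq0} reduces to $\omega_j>0$, which is exactly the assumption $\omega_j>0$. Thus Corollary \ref{c411} yields positivity of the Fox $H$-function obtained by prepending the $k$ pairs $\left(1-\omega_{k-j+1},\lambda_{k-j+1}\right)_1^k$ to the upper parameters of $H^{1,p'}_{p',p'+1}$, giving $H^{1,p'+k}_{p'+k,p'+1}$.

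Third I would translate this augmented Fox $H$-function back into a Wright function via \refeq{WrightFoxH}. Reading \refeq{WrightFoxH} in reverse, an upper row of the form $\left(1-\alpha_j,A_j\right)$ corresponds to a numerator pair $\left(\alpha_j,A_j\right)$ of the Wright function; so the prepended rows $\left(1-\omega_{k-j+1},\lambda_{k-j+1}\right)$ become the extra numerator parameters $\left(\omega_{k-j+1},\lambda_{k-j+1}\right)_1^k$, and the remaining structure reproduces ${}_{p'}W_{p'}$. This gives exactly the function ${}_{p'+k}W_{p'}$ displayed in \refeq{WrightFoxHp2}, and its positivity follows from the positivity of the Fox $H$-function just established. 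Finally I would confirm that the augmented Wright function genuinely exists as a Wright function, i.e.\ that $\mu>-1$ for it; since each Laplace step adds one numerator parameter $\lambda_j$ to $\mu$ without adding a denominator parameter, the extra summation condition $\sum_{l=1}^k\lambda_l<1$ is what keeps $\mu=0-\sum_{l=1}^k\lambda_l>-1$, which is precisely the stated hypothesis and guarantees existence for every nonvanishing $z$.

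The main obstacle I expect is the careful bookkeeping of parameters through the two correspondences: making sure the reversal $\alpha_j\mapsto 1-\alpha_j$ in \refeq{WrightFoxH} is applied consistently to the prepended rows so that $\left(1-\omega_{k-j+1},\lambda_{k-j+1}\right)$ maps back to $\left(\omega_{k-j+1},\lambda_{k-j+1}\right)$ in the correct order, and verifying that iterating the Laplace transform $k$ times does not violate the running condition $\chi_k>0$ required by \refeq{cond1LFoxH1geq0}. The latter is handled exactly as in the proof of Corollary \ref{c411}, where $\chi_k=\chi'+\sum_{j=1}^k\lambda_j>\chi'>0$, so the only genuinely new ingredient is the existence constraint $\mu>-1$ on the resulting Wright function, which is why the additional hypothesis $\sum_{l=1}^k\lambda_l<1$ must be imposed here beyond the conditions of Corollary \ref{c411}.
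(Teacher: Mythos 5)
Your proposal is correct and follows essentially the same route as the paper: start from the positive Fox $H$-function underlying Corollary \ref{WrightP} (equivalently, the e.p.\ conditions with $n_1=1$, $n_4=p'$), apply the iterated Laplace-transform Corollary \ref{c411} to prepend the $k$ upper pairs, translate back through identity \refeq{WrightFoxH}, and note that \refeq{condlLFoxHkgeq0} reduces to $\omega_j>0$ while $\sum_{l=1}^k\lambda_l<1$ is exactly what keeps $\mu>-1$ for the resulting Wright function. Your write-up is in fact more explicit than the paper's on the bookkeeping (the $\min\{\beta'_l/B'_l\}=0$ computation and the $\mu=-\sum_l\lambda_l$ calculation), but the argument is the same.
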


\begin{proof}
Relation (\ref{WrightFoxHp2}) is proved to hold for every $t>0$, if the Fox $H$-function appearing in Eq. (\ref{WrightFoxH}) is positive on $\mathbb{R}^+$. The e.p. conditions provide Fox $H$-functions which are characterized by the indexes $m=m'=1$, $n=n'=p=p'+k$, and $q=q'=p'+1$, and are positive on $\mathbb{R}^+$. The additional conditions are obtained from corollary \ref{c411}. The constraints concerning the parameters $\omega_1,\ldots,\omega_k$, $\lambda_1,\ldots,\lambda_k$ are obtained from relation (\ref{condlLFoxHkgeq0}). The latter constraint provides $\mu>-1$.
\end{proof}

\begin{corollary}
\label{c512}
The following Wright generalized hypergeometric function is positive on $\mathbb{R}^+$:
 \begin{eqnarray}
&&{}_{p'+2k} W_{p'+k}\left[-t\Bigg|
\begin{array}{ll}
\left(1-\alpha''_j,A''_j\right)_1^{2k}
,\left(v_{j},a'''_{j}\right)_1^{p'}\\
\left(w_{j},a'''_{j}\right)_1^{p'},\left(\omega_{2j-1}+\lambda_{2j-1},\omega_{2j}+\lambda_{2j}\right)_1^k
\end{array}
\right]>0, \nonumber \\&&\label{WrightFoxHp3}
\end{eqnarray}
for every $t>0$, if the involved parameters $\left(v_{j},a'''_{j}\right)_1^{p'}$ and $\left(w_{j},a'''_{j}\right)_1^{p'}$ fulfill the e.p. conditions with the parameters $a_1=1$, $b_1=0$ and 
the indexes $n_1=1$, $n_2=n_3=0$, $n_4=p'$. The remaining parameters are defined as follows:
$$
\left(1-\alpha''_j,A''_j\right)_1^{2k}=\left(
\left(\omega_{2\left(k-j\right)+1},\omega_{2\left(k-j+1\right)}\right),\left(\lambda_{2\left(k-j\right)+1},\lambda_{2\left(k-j+1\right)}\right)\right)_1^k,
$$
where $\omega_{2j-1},\omega_{2j},\lambda_{2j}>0$, for every $j=1,\ldots,k$, and $k \in \mathbb{N}$.
\end{corollary}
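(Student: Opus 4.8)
The plan is to mirror the proofs of Corollaries \ref{WrightP} and \ref{WrightPk}, but with the Laplace transform replaced by the Euler transform of Corollary \ref{c412}. First I would use relation (\ref{WrightFoxH}) to identify the Wright function displayed in (\ref{WrightFoxHp3}), namely ${}_{p'+2k}W_{p'+k}$, with a Fox $H$-function of type $H^{1,p'+2k}_{p'+2k,p'+k+1}$, in which the implicit lower entry $(0,1)$ absorbs the $n!$ of the series and each Wright entry $(x,X)$ corresponds to the Fox entry $(1-x,X)$. It therefore suffices to build a positive Fox $H$-function of this type and read off its Wright counterpart through (\ref{WrightFoxH}).

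Second, I would take as starting point the base positive Fox $H$-function of Corollary \ref{WrightP}, namely
\[
H^{1,p'}_{p',p'+1}\left[t\,\Bigg|\begin{array}{l}(1-v_j,a'''_j)_1^{p'}\\(0,1),\,(1-w_j,a'''_j)_1^{p'}\end{array}\right],
\]
obtained from Theorem \ref{Hpos} with $a_1=1$, $b_1=0$, $n_1=1$, $n_2=n_3=0$, $n_4=p'$. Here $\chi'>0$ and $m'=1$, $n'=p'$, so that $(m'n')>0$; crucially, the single lower entry counted by $m'=1$ is the $(0,1)$ term, whence $\min_{l=1,\ldots,m'}\{\beta'_l/B'_l\}=0$.

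Third, I would apply Corollary \ref{c412} iteratively $k$ times. Each application leaves $m$ fixed at $1$, raises $n$ and $p$ by $2$ and $q$ by $1$, and (as recorded in the proof of Corollary \ref{c412}) preserves $\chi=\chi'>0$; since the $(0,1)$ entry always occupies the first lower slot, $\min_{l=1,\ldots,m}\{\beta'_l/B'_l\}$ stays equal to $0$ at every step. Consequently the Euler conditions (\ref{cond1jEFoxHgeq0}) and (\ref{cond2jEFoxHgeq0}) both collapse to $\omega_{2j-1}>0$, which together with the requirement $\omega_{2j},\lambda_{2j}>0$ inherited from Corollary \ref{c412} yields exactly the parameter hypotheses of the statement. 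After $k$ steps one reaches a positive Fox $H$-function $H^{1,p'+2k}_{p'+2k,p'+k+1}$ whose upper entries are the Euler-generated block $(\alpha''_j,A''_j)_1^{2k}$ of (\ref{aa12euler}) followed by $(1-v_j,a'''_j)_1^{p'}$, and whose lower entries are $(0,1)$, then $(1-w_j,a'''_j)_1^{p'}$, then $(1-\omega_{2j-1}-\lambda_{2j-1},\omega_{2j}+\lambda_{2j})_1^k$. Translating back through (\ref{WrightFoxH}), each Fox entry $(1-x,X)$ reverts to $(x,X)$, so the Euler block becomes $(1-\alpha''_j,A''_j)_1^{2k}$, the blocks $(1-v_j,a'''_j)$ and $(1-w_j,a'''_j)$ become $(v_j,a'''_j)$ and $(w_j,a'''_j)$, and the appended lower block becomes $(\omega_{2j-1}+\lambda_{2j-1},\omega_{2j}+\lambda_{2j})_1^k$, reproducing precisely (\ref{WrightFoxHp3}). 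Existence for all $t>0$ follows from $\mu>-1$: evaluating $\mu$ from (\ref{mu}) for the resulting function, the Euler-added numerator weights $\omega_{2j}+\lambda_{2j}$ cancel against the matching denominator weights, leaving $\mu=0$, exactly as in Corollary \ref{WrightP}.

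I expect the main obstacle to be bookkeeping rather than any analytic difficulty. The delicate point is verifying that the Euler transform keeps the $(0,1)$ entry in the governing first lower slot throughout the iteration, so that the minimum equals $0$ and the side conditions (\ref{cond1jEFoxHgeq0})--(\ref{cond2jEFoxHgeq0}) reduce to $\omega_{2j-1}>0$; after that, one must carefully track the index reversals, such as $(\omega_{2(k-j)+1},\omega_{2(k-j+1)})$, through both the $k$-fold iteration and the $(1-x,X)\leftrightarrow(x,X)$ passage of (\ref{WrightFoxH}) without sign or index slips.
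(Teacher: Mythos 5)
Your proposal is correct and follows essentially the same route as the paper: identify the Wright function with a Fox $H$-function via (\ref{WrightFoxH}), start from the positive base function supplied by the e.p.\ conditions as in Corollary \ref{WrightP}, apply the Euler-transform result of Corollary \ref{c412}, and check $\mu>-1$. Your write-up merely makes explicit the bookkeeping (the $(0,1)$ entry forcing $\min_l\{\beta'_l/B'_l\}=0$ so that (\ref{cond1jEFoxHgeq0})--(\ref{cond2jEFoxHgeq0}) reduce to $\omega_{2j-1}>0$, and $\mu=0$) that the paper's proof leaves implicit.
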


\begin{proof}
Relation (\ref{WrightFoxHp3}) is proved to hold for every $t>0$, if the Fox $H$-function appearing in Eq. (\ref{WrightFoxH}) is positive on $\mathbb{R}^+$. The e.p. conditions
provide Fox $H$-functions which are characterized by indexes $m=m'=1$, $n=n'=p=p'+2k$, and $q=q'=p'+k+1$, and are positive on $\mathbb{R}^+$. The required conditions are obtained from corollary \ref{c412}. The constraints concerning the parameters $\omega_1,\ldots,\omega_k$, $\lambda_1,\ldots,\lambda_k$ are obtained from relations (\ref{cond1jEFoxHgeq0}) and (\ref{cond2jEFoxHgeq0}). The latter constraint provides $\mu>-1$.
\end{proof}

The MacRobert's $E$-function is a special case of the Fox $H$-function \cite{KilSa2010,MathSaxHau2010},
\begin{eqnarray}
E\left[\beta_{1}, \ldots,\beta_{q};\alpha_{1}, \ldots,\alpha_{p-1};z\right]=H^{q,1}_{p,q}\left[z
\Bigg|
\begin{array}{ll}\left(1,1\right),\left(\alpha_j,1\right)_1^{p-1}\\  \left(\beta_j,1\right)_1^{q}
\end{array}
\right]. \label{EFoxH}
\end{eqnarray}

\begin{corollary}
\label{MacRobertP}
The following MacRobert's $E$-function is positive on $\mathbb{R}^+$,
\begin{eqnarray}
E\left[b_1,\ldots,b_{q'-p'},c_1,\ldots, c_{p'-1},o_1; d_1,\ldots,d_{p'-1}; t\right]>0, \label{EMRp}
\end{eqnarray}
for every $t>0$, if the involved parameters fulfill the e.p. conditions with the parameters $a_j=a'_l=a''_1=1$, for every $j=1,\ldots,q'-p'$, $l=1,\ldots,p'-1$, and $r_1=0$, and indexes $n_1=q'-p'$, $n_2=p'-1$, $n_3=1$, $n_4=0$. 
\end{corollary}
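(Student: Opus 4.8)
The plan is to exhibit the MacRobert $E$-function on the left-hand side of (\ref{EMRp}) as one of the positive Fox $H$-functions (\ref{FoxHft}) and then to invoke Theorem \ref{Hpos}. First I would apply the representation (\ref{EFoxH}) to rewrite this $E$-function as the Fox $H$-function $H^{q',1}_{p',q'}$ all of whose scaling parameters $A_j,B_j$ equal unity, whose upper parameters are $(1,1),(d_1,1),\ldots,(d_{p'-1},1)$, and whose lower parameters are $(b_1,1),\ldots,(b_{q'-p'},1),(c_1,1),\ldots,(c_{p'-1},1),(o_1,1)$. The goal is then to recognise this Fox $H$-function as an instance of (\ref{FoxHft}).

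Next I would match the indices and parameters. Inserting the prescribed choice $n_1=q'-p'$, $n_2=p'-1$, $n_3=1$, $n_4=0$ into (\ref{mp})--(\ref{qp}) reproduces exactly $m'=q'$, $n'=1$, $p'=p'$, $q'=q'$, i.e.\ the indices of the above Fox $H$-function. With $a_j=a'_l=a''_1=1$ and $r_1=0$, the lists (\ref{FoxHftA}) and (\ref{FoxHftB}) collapse term by term: the single $n_3$-entry $(1-r_1,a''_1)=(1,1)$ furnishes the characteristic first upper parameter, the $n_2$-entries $(d_j,a'_j)=(d_j,1)$ the remaining upper parameters, while the $n_1$-, $n_2$- and $n_3$-entries $(b_j,1)$, $(c_j,1)$, $(o_1,1)$ furnish the lower parameters. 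Hence the two Fox $H$-functions coincide. I would then check the hypotheses of Theorem \ref{Hpos}: since $n_3=1\geq1$, condition (\ref{condn1n2n3n4}) is satisfied and (\ref{chift}) gives $\chi'=(q'-p')+2>0$, so the Fox $H$-function exists and is continuous on $\mathbb{R}^+$; granting (\ref{cond1prime}) and the remaining e.p.\ conditions, Theorem \ref{Hpos} delivers positivity, which transfers back to the $E$-function via (\ref{EFoxH}).

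The step demanding the most care is the value $r_1=0$, which sits on the boundary of the admissible range $r_1>0$ imposed in (\ref{cond_appro}); it is precisely this choice that manufactures the $(1,1)$ upper parameter distinctive of MacRobert's $E$-function. I would justify it by observing that the corresponding elementary factor $\psi_{a''_1,o_1,0}$ defined by (\ref{psiudef}) equals $\Gamma(o_1)\,t^{o_1/a''_1}(1+t^{1/a''_1})^{-o_1}/a''_1$, which is finite and strictly positive on $\mathbb{R}^+$ as soon as $o_1>0$; thus it remains of type $\rho_1$ in the terminology of Theorem \ref{posf}, with Mellin transform given by the limiting form $\Gamma(a''_1 s+o_1)\Gamma(-a''_1 s)$ of (\ref{MTpsi}). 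Consequently the Mellin-convolution positivity argument underlying Theorems \ref{posf} and \ref{Hpos} still applies at this boundary value, and the conclusion $f(t)>0$, hence (\ref{EMRp}), is unaffected.
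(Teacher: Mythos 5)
Your proof is correct and follows the same route as the paper's (much terser) argument: identify the $E$-function with the Fox $H$-function $H^{q',1}_{p',q'}$ via (\ref{EFoxH}), match indices and parameters to the e.p.\ conditions with $n_1=q'-p'$, $n_2=p'-1$, $n_3=1$, $n_4=0$, and invoke Theorem \ref{Hpos}. Your explicit treatment of the boundary value $r_1=0$ --- which strictly violates the requirement $r_j>0$ in (\ref{cond_appro}) and is passed over in silence by the paper --- is a genuine improvement; the one residual point is that with $\xi'=0$ the Mellin strip (\ref{stripMft}) is nonempty only when $\xi>0$, so alongside your condition $o_1>0$ one should also require $b_j,c_j>0$ rather than merely $\geq 0$.
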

\begin{proof}
Relation (\ref{EMRp}) is proved to hold for every $t>0$, if the Fox $H$-function appearing in Eq. (\ref{EFoxH}) is positive on $\mathbb{R}^+$. The e.p. conditions provide Fox $H$-functions which are positive on $\mathbb{R}^+$ and are characterized by indexes $m=m'=q=q'$, $n=n'=1$, $p=p'$ and parameters $a_j=a'_l=a''_1=1$, for every $j=1,\ldots,q'-p'$, $l=1,\ldots,p'-1$, and $r_1=0$.
\end{proof}

The Meijer $G$-function is is a special case of the Fox $H$-function \cite{KilSa2010,MathSaxHau2010},
\begin{eqnarray}
&&\hspace{-2.5em}
G_{p,q}^{m,n}\left[z^{1/\lambda}\Bigg|
\begin{array}{rr}
\alpha_{1}, \ldots,\alpha_{p}\\
\beta_{1}, \ldots,\beta_{q}
\end{array}
\right]=\lambda \, H_{p,q}^{m,n}\left[z\Bigg|
\begin{array}{rr}
\left(\alpha_j,\lambda\right)_1^{p}\\
\left(\beta_j,\lambda\right)_1^{q}
\end{array}
\right], \label{GeqF}
\end{eqnarray}
for every $\lambda>0$.

\begin{corollary}
\label{MeijerP}
The Meijer $G$-function is positive on $\mathbb{R}^+$,
\begin{eqnarray}
G_{p',q'}^{m',n'}\left[t\Bigg|
\begin{array}{rr}
\alpha'_{1}\pm \omega, \ldots, \alpha'_{p'}\pm \omega\\
\beta'_{1}\pm \omega, \ldots, \beta'_{q'}\pm \omega
\end{array}
\right]
>0, \label{Gp}
\end{eqnarray}
for every $\omega,t>0$, if the e.p. conditions hold for the indexes $m=m'$, $n=n'$, $p=p'$, $q=q'$, and the parameters $\alpha'_{1}, \ldots, \alpha_{p'}$, $\beta'_{1}, \ldots, \beta_{q'}$, $A'_1=\ldots=A'_{p'}=B'_1=\ldots=B'_{q'}=\lambda>0$. 
\end{corollary}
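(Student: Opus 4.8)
The plan is to reduce the positivity of the Meijer $G$-function in (\ref{Gp}) to the positivity of an auxiliary Fox $H$-function, exploiting the circumstance that relation (\ref{GeqF}) represents a Meijer $G$-function as a Fox $H$-function in which all the scaling exponents coincide, $A_j=B_j=\lambda$. The base object is furnished by Theorem \ref{Hpos}: under the e.p. conditions with $A'_1=\cdots=A'_{p'}=B'_1=\cdots=B'_{q'}=\lambda$, the Fox $H$-function
$$H^{m',n'}_{p',q'}\left[z\Bigg|\begin{array}{ll}\left(\alpha'_j,\lambda\right)_1^{p'}\\ \left(\beta'_j,\lambda\right)_1^{q'}\end{array}\right]$$
is positive for every $z>0$, and it will serve as the starting point from which the shifted Meijer parameters $\alpha'_j\pm\omega$, $\beta'_j\pm\omega$ are produced.

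First I would apply Corollary \ref{c41} in its third branch, that is identity (\ref{FoxH14geq0}), to translate all the first entries of the base function by $\pm\omega$. Since in (\ref{FoxH14geq0}) the increments applied to $\alpha_j$ and $\beta_j$ are $\pm\omega A_j$ and $\pm\omega B_j$, and here $A_j=B_j=\lambda$, the identity is to be invoked with $\omega/\lambda>0$ in place of $\omega$; this produces exactly $\alpha'_j\pm\omega$ and $\beta'_j\pm\omega$, and positivity is preserved because multiplication by the positive factor $z^{\pm\omega/\lambda}$ cannot change the sign, so
$$H^{m',n'}_{p',q'}\left[z\Bigg|\begin{array}{ll}\left(\alpha'_j\pm\omega,\lambda\right)_1^{p'}\\ \left(\beta'_j\pm\omega,\lambda\right)_1^{q'}\end{array}\right]>0,\qquad z,\omega>0.$$
Then I would invoke (\ref{GeqF}) with $t=z^{1/\lambda}$ to identify this positive Fox $H$-function with $\lambda^{-1}$ times the Meijer $G$-function of (\ref{Gp}). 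Since $\lambda>0$ and the map $z\mapsto z^{1/\lambda}$ sends $\mathbb{R}^+$ bijectively onto itself, positivity of the Fox $H$-function for all $z>0$ transfers to positivity of the Meijer $G$-function for all $t>0$, which is precisely the assertion (\ref{Gp}).

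The only delicate point is the bookkeeping of the shift constant: one must check that feeding $\omega/\lambda$ into (\ref{FoxH14geq0}) reproduces exactly the $\pm\omega$ carried by the Meijer parameters and that $\omega/\lambda$ remains positive so that Corollary \ref{c41} is applicable. Preservation of the existence constraint (\ref{cond1prime}) requires no separate verification, since the equal shifts of $\alpha'_j$ and $\beta'_{j'}$ cancel in the difference entering that constraint, so the well-posedness of the base Fox $H$-function is inherited verbatim. With these observations in place the argument reduces to a direct substitution rather than a computation.
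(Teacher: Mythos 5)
Your proposal is correct and follows essentially the same route as the paper: the e.p.\ conditions give a positive base Fox $H$-function with all $A_j=B_j=\lambda$, the shift identity (\ref{FoxH14geq0}) via Corollary \ref{c41} introduces the $\pm\omega$ translations, and relation (\ref{GeqF}) converts the result into the Meijer $G$-function. Your explicit use of the shift constant $\omega/\lambda$ so that $\pm(\omega/\lambda)A_j=\pm\omega$ is a welcome piece of bookkeeping that the paper's own proof leaves implicit.
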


\begin{proof}
Relation (\ref{Gp}) is proved to hold for every $t>0$, and $\omega=0$, if the Fox $H$-function appearing in Eq. (\ref{GeqF}) is positive on $\mathbb{R}^+$. The e.p. conditions
provide positive Fox $H$-functions with the required values of the parameters and $\omega=0$. Relation (\ref{Gp}) holds for every $\omega>0$, due to corollary \ref{c41}.
\end{proof}

Further forms of special functions, which are positive on $\mathbb{R}^+$, can be determined from the above-reported particular cases and positive Fox $H$-functions. The above-reported procedures are derived from elementary properties of the Fox $H$-function and integral transforms.

\section{Summary and conclusions 
}\label{7}

 In the present research work, we have considered four elementary functions which are defined via stretched exponential and power laws and are non-negative, uniformly continuous and bounded on $\mathbb{R}^+$. We have considered the Mellin convolution products of every finite combination, with possible repetitions, of the four above-mentioned functions. We have selected the combinations such that the corresponding convolution products are positive on $\mathbb{R}^+$. The selected convolutions products are equal to positive Fox $H$-functions if determined conditions, labeled as the e.p. conditions, hold. Thus, the above-reported method allows to determine Fox $H$-functions which are positive on $\mathbb{R}^+$. The above-reported properties are demonstrated by relying on the non-negativity, uniform continuity and boundedness of the involved functions and the existence and uniqueness of the Mellin inversion, which are realized by the e.p conditions.

Elementary properties and integral transforms allow to determine further forms of positive Fox $H$-functions starting from Fox $H$-functions which are positive on $\mathbb{R}^+$. In conclusion, the present approach allows to determine a class of Fox $H$-functions which are positive on $\mathbb{R}^+$.



\begin{thebibliography}{00}

\bibitem{BegCriDaSil2024}
Beghin L., Cristofaro L., da Silva J.L, \emph{ Fox-H Densities and Completely Monotone Generalized Wright Functions}, J. Theor. Probab. {\bf 38}, 18 (2025).

\bibitem{ButJan1997}Butzer P.L., Jansche S., \emph{A Direct Approach to the Mellin Transform}, Journal of Fourier Analysis and Applications 
{\bf 3 }, 325 (1997).

\bibitem{CartSprin1977} Carter B.D., Springer M.D., \emph{ The distribution of products, quotients and powers of independent H-function variates} SIAM J. Appl. Math. { \bf 33}, 542 (1977).




\bibitem{DoetschHLT} Doetsch G., \emph{Handbuch der Laplace Transformation}, Vols. 1-3 (Birk\"auser, Basel, 1955).





\bibitem{Fox1961} Fox C., \emph{The $G$ and $H$ functions as symmetrical Fourier kernels}, Trans. Amer. Math. Soc. {\bf 98}, 395 
(1961).


\bibitem{GlaePrudSkorn2006}Glaeske H.-J., Prudnikov A.P., Sk\'ornik K.A., \emph{Operational Calculus and Related Topics}, Chapman \& Hall/CRC, Taylor and Francis Group, Boca Raton, London, New York (2006).


\bibitem{ElKaMeh2010} El Kamel J., Mehrez K., \emph{A class of strictly positive definite and logarithmically completely monotonic
functions related to the modified Bessel functions}, Positivity, { \bf 22} 1403 
(2018).




\bibitem{KilSa2010} Kilbas A.A., Saigo M. \emph{ $H$-transforms: Theory and applications}, Chapman \& Hall/CRC (2004).


\bibitem{Luc2019}
Luchko, Y. \emph{The Wright function and its applications}, In: {\it Handbook of
Fractional Calculus with Applications} (J. Tenreiro Machado Ed.), {\bf 1}. {\it Basic
Theory} (A. Kochubei, Yu. Luchco eds.), 241 
(2019).


\bibitem{LucKir2013}Luchko Y., Kiryakova V., The Mellin integral transform in fractional calculus, Fract. Calc. Appl. Anal. {\bf 16} 405 
(2013).


\bibitem{MaPaSa2005}Mainardi F., Pagnini G., Saxena R.K., \emph{Fox H functions in fractional diffusion}, J. Comput. Appl. Math., {\bf 178}, 321 
(2005).




\bibitem{MarichevMT} Marichev O.I., \emph{Handbook of Integral Transforms of Higher Transcendental Functions}, Theory and Algorithmic Tables (Ellis Horwood, Chichester, 1982).

\bibitem{MathSax1978}Mathai A.M., Saxena R.K., \emph{The $H$-function with Applications in Statistics and Other Disciplines}, Wiley Eastern Ltd.,New Delhi, 1978.



\bibitem{MathSaxHau2010} Mathai A.M., Saxena R.K., Haubold H.J., \emph{ The H-functions: Theory and applications}, Springer (2010).

\bibitem{Meh2018}Mehrez K., \emph{ New integral representations for the Fox–Wright functions
and its applications}, J. Math. Anal. Appl. {\bf 468},  650 
(2018).




\bibitem{Meh2021}Mehrez K., \emph{ Positivity of certain classes of functions related to the Fox H-functions with applications}, Anal. Math. Phys. {\bf 11}, 114 (2021).



\bibitem{PrBrMa1986} Prudnikov A.P., Brychkov Y.A., Marichev O.I., \emph{Integrals and Series. Additional Chapters} (Nauka, Moscow, 1986).


\bibitem{PrBrMa1990}Prudnikov A.P., Brychkov Y.A., Marichev O.I. \emph{Integrals and series, Vol. 3, more special functions}, Gordon and Breach Science, New York (1990).

\bibitem{SaKiMa1993}Samko S.G., Kilbas A.A., Marichev O.I., \emph{Fractional Integrals and Derivatives. Theory and Applications},  Gordon and Breach Science Publ., Yverdon (1993).


\bibitem{SriGuGo1982}Srivastava H.M., Gupta K.C., Goyal, S.P., \emph{The H-functions of one and two variables}, New Delhi: South Asian Publishers Pvt. Ltd (1982).


\bibitem{TitchmarshFT} Titchmarsh E.C., \emph{Introduction to the Theory of Fourier Transforms.}, Chelsea Publishing, New York (1986); first edition by Oxford University Press, Oxford.



\bibitem{Widder} Widder D.V., \emph{The Laplace Transform}, (Princeton Univ. Press, Princeton, NJ, 1941).


\bibitem{Wrightdef} Wright E.M., Journal London Math. Soc. {\bf 8}, 71 
(1933).




\end{thebibliography}
\end{document}